\newcommand\blam{{\boldsymbol\lambda}}
\newcommand\bmu{{\boldsymbol\mu}}
\def\({\big(}
\def\){\big)}
\def\Z{\mathbb{Z}}
\def\Q{\mathbb{Q}}
\def\N{\mathbb{N}}
\def\lam{\lambda}
\def\Lam{\Lambda}
\def\Sym{\mathfrak{S}}
\newcommand\HH{\mathscr{H}}
\def\P{\mathscr{P}}
\def\wf{\widetilde{F}}
\DeclareMathOperator\Hom{Hom}
\DeclareMathOperator\End{End}
\DeclareMathOperator\Hc{H}
\title[Cyclotomic nilHecke algebras]
{Schubert Class and cyclotomic nilHecke algebras}
\subjclass[2010]{20C08, 16G99, 06B15}
\keywords{Cyclotomic nilHecke algebras, Grassmannian, Schur polynomials}
\author{Kai Zhou}
  \address{School of Mathematical Sciences\\
  Zhejiang University\\
  Hangzhou, 310027, P.R. China}
  \email{11635017@zju.edu.cn}
\author{Jun Hu}\address{School of Mathematical and Statistics\\
  Beijing Institute of Technology\\
  Beijing, 100081, P.R. China}
\email{junhu404@bit.edu.cn}
\numberwithin{equation}{section}
\newtheorem{prop}[equation]{Proposition}
\newtheorem{thm}[equation]{Theorem}
\newtheorem{cor}[equation]{Corollary}
\newtheorem{lem}[equation]{Lemma}
\theoremstyle{definition}
\newtheorem{dfn}[equation]{Definition}
\theoremstyle{remark}
\newtheorem{rem}[equation]{Remark}
\begin{document}

\begin{abstract} Let $\ell, n$ be positive integers such that $\ell\geq n$. Let $\mathbb{G}_{n,\ell}$ be the Grassmannian which consists of the set of $n$-dimensional subspaces of $\mathbb{C}^{\ell}$. There is a $\mathbb{Z}$-graded algebra isomorphism between the cohomology $\Hc^*(\mathbb{G}_{n,\ell},\Z)$ of $\mathbb{G}_{n,\ell}$ and a natural $\Z$-form $B$ of the $\mathbb{Z}$-graded basic algebra of the type $A$ cyclotomic nilHecke algebra $\HH_{\ell,n}^{(0)}=\<\psi_1,\cdots,\psi_{n-1},y_1,\cdots,y_n\>$. In this paper, we show that the isomorphism can be chosen such that the image of each (geometrically defined) Schubert class $(a_1,\cdots,a_{n})$ coincides with the basis element $b_{\blam}$ constructed by Jun Hu and Xinfeng Liang by purely algebraic method, where $0\leq a_1\leq a_2\leq\cdots\leq a_{n}\leq \ell-n$ with $a_i\in\Z$ for each $i$, $\blam$ is the $\ell$-multipartition of $n$ associated to $\(\ell+1-(a_{n}+n), \ell+1-(a_{n-1}+n-1),\cdots,\ell+1-(a_1+1))$.
A similar correspondence between the Schubert class basis of the cohomology of the Grassmannian $\mathbb{G}_{\ell-n,\ell}$ and the $b_{\lam}$'s basis of the natural $\Z$-form $B$ of the $\mathbb{Z}$-graded basic algebra of $\HH_{\ell,n}^{(0)}$ is also obtained. As an application, we obtain a second version of Giambelli formula for Schubert classes.
\end{abstract}

\maketitle
\setcounter{tocdepth}{1}

\section{Introduction}

The nilHecke algebra of type $A$ was introduced by Kostant and Kumar \cite{KoK} and plays an important role in the study of cohomology of flag varieties and Schubert calculus, see \cite{Hi}.
In recent years, these algebras and their cyclotomic quotients have found some remarkable applications in the categorification of quantum groups, see \cite{KK}, \cite{KhovLaud:diagI}, \cite{KLMS}, \cite{KLM},  \cite{Lau}, \cite{Lau2}, \cite{LV}, \cite{Mathas:Singapore} and \cite{Rou0}. First, let us recall their definitions.

\begin{dfn}\label{nilHecke}
Let $\ell,n\in\N$ and $K$ be any field. The nilHecke algebra $\HH_{n}^{(0)}:=\HH_{n}^{(0)}(K)$ of type $A$
is the unital associative $K$-algebra generated by $\psi_1,\cdots,\psi_{n-1},y_1,\cdots,y_n$
which satisfy the following relations: $$\begin{aligned}
& \psi_r^2=0,\quad \forall\,1\leq r<n,\\
& \psi_r\psi_k=\psi_k\psi_r,\quad\forall\, 1\leq k< r-1< n-1,\\
& \psi_r\psi_{r+1}\psi_r=\psi_{r+1}\psi_r\psi_{r+1},\quad\forall\,1\leq r<n-1,\\
& y_r y_k=y_k y_r,\quad\forall\,1\leq r,k\leq n,\\
& \psi_r y_{r+1}=y_r\psi_r+1,\quad y_{r+1}\psi_r=\psi_r y_r+1,\quad\forall\,1\leq r<n,\\
& \psi_ry_k=y_k\psi_r,\quad\forall\, k\neq r,r+1.
\end{aligned}
$$
The cyclotomic nilHecke algebra $\HH_{\ell,n}^{(0)}:=\HH_{\ell,n}^{(0)}(K)$ of type $A$ is the quotient of $\HH_{n}^{(0)}$ by the two-sided ideal generated by $y_1^\ell$.
\end{dfn}

It is clear that both $\HH_{n}^{(0)}$ and $\HH_{\ell,n}^{(0)}$ are $\Z$-graded $K$-algebras such that each $\psi_r$ is homogeneous with $\deg\psi_r=-2$ and each $y_s$ is homogeneous with $\deg y_s=2$ for all $1\leq r<n, 1\leq s\leq n$. Let $*$ be the $K$-algebra anti-involution of $\HH_{n}^{(0)}$ (or $\HH_{\ell,n}^{(0)}$) which is defined on generators by $\psi_r^*=\psi_r, y_k^*=y_k$ for each $1\leq r<n, 1\leq k\leq n$.
By \cite[(2.7)]{HuLiang} we know that $\HH_{\ell,n}^{(0)}=0$ unless $n\leq\ell$. Henceforth we always assume $n\leq\ell$.

In case some readers are not familiar with the definition of basic algebra, we include it here.
\begin{dfn}
Let $A$ be a $K$-algebra. Suppose $\{P_i\mid i\in I\}$, where $I$ is an index set, is a complete set of non-isomorphic
indecomposable projective $A$-modules, we define the endomorphism algebra $B(A):=\End_A(\oplus_{i\in I}P_i)$ to be the basic algebra of $A$.
\end{dfn}

In a recent work \cite{HuLiang}, the second author and Xinfeng Liang constructed a monomial basis of the cyclotomic nilHecke algebra $\HH_{\ell,n}^{(0)}$ and showed that the $\mathbb{Z}$-graded basic algebra $B\bigl(\HH_{\ell,n}^{(0)}\bigr)$ of $\HH_{\ell,n}^{(0)}$ is commutative and is hence isomorphic to its center $Z(\HH_{\ell,n}^{(0)})$. Furthermore, they constructed an integral basis $\{b_\blam\mid\blam\in\P(0)\}$ for the basic algebra $B\bigl(\HH_{\ell,n}^{(0)}\bigr)$ and an integral basis $\{z_{\blam}\mid\blam\in\P(0)\}$ for the center $Z(\HH_{\ell,n}^{(0)})$ by a purely algebraic method, where each $z_\blam$ is the evaluation of certain unknown symmetric polynomial at $\{y_1,\cdots,y_n\}$, and $\P(0)$ is the set of $\ell$-multipartitions of $n$ with each component being either $(1)$ or empty. Note that $\P(0)$ is in a natural bijection with the following set \begin{equation}\label{pLN}
\P_{\ell,n}:=\bigl\{\blam=(k_1,\cdots,k_{n})\bigm|1\leq k_1<k_2<\cdots<k_{n}\leq \ell, k_i\in\Z,\forall\,i\bigr\} .
\end{equation}
Henceforth we shall identify $\P(0)$ with $\P_{\ell,n}$ without further comments.

In \cite{Lau2}, Lauda proved that the cyclotomic nilHecke algebra $\HH_{\ell,n}^{(0)}$ over $\Q$ is isomorphic to the $n!\times n!$ matrix ring over the cohomology ring of the Grassmannian $\mathbb{G}_{n,\ell}$ with coefficient in $\Q$. In particular, there is an isomorphism between the basic algebra $B\bigl(\HH_{\ell,n}^{(0)}\bigr)$ of $\HH_{\ell,n}^{(0)}$ and the cohomology ring $\Hc^*(\mathbb{G}_{n,\ell},\Q)$ of the Grassmannian $\mathbb{G}_{n,\ell}$. Note that both the algebra $\HH_{\ell,n}^{(0)}$ and its basic algebra $B\bigl(\HH_{\ell,n}^{(0)}\bigr)$ are defined over $\Z$, and Lauda's isomorphism is actually well-defined over $\Z$. We define \begin{equation}\label{thetaLN}
\Theta_{\ell,n}:=\bigl\{(a_1,\cdots,a_{n})\bigm|0\leq a_1\leq\cdots\leq a_{n}\leq \ell-n, a_i\in\Z,\forall\,i\bigr\} .
\end{equation}
By \cite[Chapter III, \S3]{Hi}, for each $(a_1,\cdots,a_{n})\in\Theta_{\ell,n}$, there is a \textbf{Schubert class} $(a_1,\cdots,a_n)\in\Hc^*(\mathbb{G}_{n,\ell},\Z)$. Moreover, the elements in the set
$\{(a_1,\cdots,a_{n})\mid(a_1,\cdots,a_n)\in\Theta_{\ell,n}\}$ form an $\Z$-basis of $\Hc^*(\mathbb{G}_{n,\ell},\Z)$. Therefore, it is natural to ask what the preimage of these geometrically defined Schubert class basis elements in the natural $\Z$-form $B$ of the basic algebra of $\HH_{\ell,n}^{(0)}$ are.

The starting point of this work is to answer the above question. Using Borel's picture for the cohomology of the Grassmannian $\mathbb{G}_{n,\ell}$, we construct an explicit $\Z$-algebra isomorphism between a natural $\Z$-form $B$ of the basic algebra of $\HH_{\ell,n}^{(0)}$  and the cohomology ring $\Hc^*(\mathbb{G}_{n,\ell},\Z)$ of the Grassmannian $\mathbb{G}_{n,\ell}$ such that the purely algebraic defined element
$b(\blam)$(denoted by $b_{\blam}$ in \cite{HuLiang}) is sent to the geometrically defined Schubert class basis element $(a_1,\cdots,a_{n})$, where $\blam\in\P(0)$ is the $\ell$-multipartition associated to the $n$-tuple $(\ell+1-(a_n+n), \ell+1-(a_{n-1}+n-1),\cdots, \ell+1-(a_1+1))\in\P_{\ell,n}$. A similar isomorphism between the natural $\Z$-form $B$ of the basic algebra of $\HH_{\ell,n}^{(0)}$ and the cohomology ring $\Hc^*(\mathbb{G}_{\ell-n,\ell},\Z)$ of the Grassmannian $\mathbb{G}_{\ell-n,\ell}$ is constructed too. On the other hand, it is well-known that there exist some non-canonical isomorphisms between the Grassmannian $\mathbb{G}_{n,\ell}$ and the Grassmannian $\mathbb{G}_{\ell-n,\ell}$, see \cite[Exercise 3.2.6]{Man}. Hence $\Hc^*(\mathbb{G}_{n,\ell},\Z)$ and $\Hc^*(\mathbb{G}_{\ell-n,\ell},\Z)$ are isomorphic to each other as $\Z$-algebras. We expect that the idea and method used in the current paper can be applied in the study of the odd nilHecke algebras and their cyclotomic quotients as well as the odd analogues of the cohomology rings of Grassmannian (cf. \cite{EKL}).
\medskip

The content of the paper is organised as follows. In Section 2, we first introduce some definitions and notations. After recalling some basic knowledge about nilHecke algebras, we then identify (up to a sign) the center element $z_{\blam}$ defined in \cite[Definition 3.3]{HuLiang}
with the evaluation of the Schur symmetric polynomials $s_{\rho(\blam)}(x_1,\cdots,x_n)$ at $x_1:=y_1, \cdots, x_n:=y_n$.
In Section 3, we establish the Pieri formula and Jacobi-Trudi formula for the elements $\{b_{\lambda}\mid\lam\in\P_n\}$,
where $\P_n$ is the set of partitions $\lam$ with length $\ell(\lam)\leq n$.
Using the Giambelli formula for the Schubert class basis element, we give the proof of
the main result Proposition \ref{mainthprop2} which constructs an isomorphism
between $\Hc^*(\mathbb{G}_{n,\ell},\Z)$ and the natural $\Z$-form $B$ of the basic algebra of $\HH_{\ell,n}^{(0)}$
sending the Schubert class basis elements to the $b(\blam)$ (denoted by $b_{\blam}$ in \cite{HuLiang}) basis elements.
In Section 4, we first present a similar isomorphism between $\Hc^*(\mathbb{G}_{\ell-n,\ell},\Z)$
and the natural $\Z$-form $B$ of the basic algebra of $\HH_{\ell,n}^{(0)}$ that sending the Schubert class basis elements
to $b(\blam)$ basis elements. Then we use it to characterize explicitly the image of any
Schubert class element $(a_1,\cdots,a_{n})$ under some natural isomorphism between
$\Hc^*(\mathbb{G}_{n,\ell},\Z)$ and $\Hc^*(\mathbb{G}_{\ell-n,\ell},\Z)$.
As an application, we get a second version of Giambelli formula for Schubert classes in Corollary \ref{lastcor}.

\bigskip
\section*{Acknowledgements}

 The research was supported by the National Natural Science Foundation of China (No. 11525102).
\bigskip

\section{Preliminaries}

Let $m\in\N$. A partition $\lam=(\lam_1,\lam_2, \cdots)$ of $m$ is a non-increasing sequence of non-negative integers which sums to $m$. If  $\lam=(\lam_1,\lam_2,\cdots)$ is a partition of $m$, then we write $|\lam|=m$ and say that the size of $\lam$ is $m$. The length $\ell(\lam)$ of $\lam$ is defined to be the largest integer $k$ such that $\lam_k\neq 0$. A partition is uniquely determined by its diagram. The \textbf{conjugate} of $\lam$ is the partition $\lam'$ such that $\lam'_k=\#\{j\mid\lam_j\geq k\}$ for each $k\geq 1$. Let $\P_n:=\{\lam=(\lam_1,\cdots,\lam_n)\mid\lam_1\geq\lam_2\geq\cdots\geq\lam_n,\lam_i\in\N,\forall\,i\}$, i.e., the set of partitions $\lam$ with $\ell(\lam)\leq n$.

Let $\ell,n\in\N$ with $\ell\geq n$. Let $\P(0)$ be the set of $\ell$-multipartitions of $n$ with each component being either $(1)$ or empty. That is, $$ \P(0):=\biggl\{\blam={(\lambda^{(1)},\cdots,\lambda^{(\ell)})}\biggm|\sum_{j=1}^{\ell}|\lambda^{(j)}|=n,\lambda^{(j)}\in\{\emptyset,(1)\},\forall\,j\biggr\} .
$$
There is a natural bijection $\theta$ between $\P(0)$ and $\P_{\ell,n}$ which is defined as follows: for $\blam=(\lambda^{(1)},\dots,\lambda^{(\ell)})\in\P(0)$, we define $\theta(\blam)$ to be the unique $n$-tuple $(k_1,\dots,k_n)\in\P_{\ell,n}$ such that for each $1\leq j\leq\ell$, $$\lambda^{(j)}=\begin{cases}
    (1), & \text{ if }\ j=k_i\ \text{for some}\ 1\leq i\leq n,\\
    \emptyset, & \text{ otherwise }.
\end{cases}$$
{\it Henceforth we shall identify $\P(0)$ with $\P_{\ell,n}$ using the above bijection.}

\begin{dfn}\label{injective}
We define an injective map $\rho: \P_{\ell,n}\hookrightarrow\P_n$ as follows: for any $\blam=(k_1,\cdots,k_n)\in\P_{\ell,n}$, set $$\rho(\blam):=(\ell-k_1-n+1,\ell-k_2-n+2,\dots,\ell-k_n).$$
\end{dfn}

Let $\Sym_n$ be the symmetric group on $\{1,2,\dots,n\}$. For each $1\leq i<n$, set $s_i:=(i,i+1)\in\Sym_n$. Then $\{s_1,\cdots,s_{n-1}\}$ is the standard set of Coxeter generators for $\Sym_n$. If $w\in\Sym_n$ then the length of $w$ is $$
\ell(w):=\min\{k\in\N\mid\text{$w=s_{i_1}\cdots s_{i_k}$ for some $1\leq i_1,\cdots,i_k<n$}\}.
$$
Let $w_{0}$ be the unique longest element in $\Sym_n$.

Let $\HH_{n}^{(0)}=\<\psi_r,y_k\mid1\leq r<n,1\leq k\leq n\>$ and $\HH_{\ell,n}^{(0)}=\<\psi_r,y_k\mid1\leq r<n,1\leq k\leq n\>$ be the type $A$ nilHecke algebra and the type $A$ cyclotomic nilHecke algebra as introduced in Definition \ref{nilHecke} respectively. Let $$\pi_\ell: \HH_{n}^{(0)}\twoheadrightarrow\HH_{\ell,n}^{(0)}$$ be the natural surjection which sends $\psi_r$ to $\psi_r$ and $y_k$ to $y_k$ for each $1\leq r<n, 1\leq k\leq n$. Suppose $w\in\Sym_n$. If $w=s_{i_1}\cdots s_{i_k}$ with $k=\ell(w)$ then $s_{i_1}\cdots s_{i_k}$ is a reduced expression for $w$.
In this case, we define $\psi_w:=\psi_{i_1}\cdots\psi_{i_k}$ in $\HH_n^{(0)}$(or in $\HH_{\ell,n}^{(0)}$).
The braid relations in Definition \ref{nilHecke} ensures that $\psi_w$ does not depend on the choice of the reduced expression for $w$.

Let $K[x_1,\cdots,x_n]$ be the polynomial ring with $n$ indeterminates $x_1,\cdots,x_n$.
We define the action of $\Sym_n$ on $K[x_1,\dots,x_n]$ by
$$w\cdot f(x_1,\dots,x_n):=f(x_{w(1)},\dots,x_{w(n)}),\qquad\forall w\in\Sym_n,\,\forall f\in K[x_1,\dots,x_n].$$
Following \cite{Rou0}, for each $1\leq i<n$, we define $\partial_i$ to be the $K$-linear maps on $K[x_1,\dots,x_n]$ such that
$$\partial_i(f)=\cfrac{f-s_i\cdot f}{x_{i+1}-x_i},\qquad \forall\,f\in K[x_1,\dots,x_n].$$

\begin{lem}[\cite{Rou0}]\label{the relations of Demazure operators}
We have
\begin{align*}
\partial_i^2&=0,\qquad 1\leq i<n,\\
\partial_{i+1}\partial_i\partial_{i+1}&=\partial_i\partial_{i+1}\partial_i,\qquad 1\leq i<n-1,\\
\partial_{i}\partial_{j}&=\partial_j\partial_{i},\qquad 1\leq i<j-1<n-1.
\end{align*}
\end{lem}

Let $w\in\Sym_n$. If $w=s_{i_1}\cdots s_{i_k}$ is a reduced expression for $w$,
then we define the $K$-linear map $\partial_w:=\partial_{i_1}\cdots\partial_{i_k}$.
The braid relations in Lemma \ref{the relations of Demazure operators}
ensures that $\partial_w$ does not depend on the choice of the reduced expression for $w$.

It is well known that $K[x_1,\dots,x_n]$ becomes a faithful representation of $\HH_{n}^{(0)}$ as follows: $\forall f\in K[x_1,\dots,x_n]$,
$$\psi_i\cdot f=\partial_i(f),\quad y_k\cdot f=x_kf,\qquad \forall 1\leq i<n,\,\forall 1\leq k\leq n.$$
For each $w\in\Sym_n$, it is clear that $\psi_w$ acts on $K[x_1,\dots,x_n]$ by the $K$-linear map $\partial_{w}$.

\begin{lem}[\cite{KhovLaud:diagI}] \label{standardbasis}
The elements in the set $\{\psi_wy_1^{c_1}\cdots y_n^{c_n}\mid w\in\Sym_n, c_1,\cdots,c_n\in\N\}$
form a $K$-basis of the nilHecke algebra $\HH_{n}^{(0)}$. Similarly, the elements in the set
$\{y_1^{c_1}\cdots y_n^{c_n}\psi_w\mid w\in\Sym_n, c_1,\cdots,c_n\in\N\}$ form a $K$-basis of the nilHecke algebra $\HH_{n}^{(0)}$.
\end{lem}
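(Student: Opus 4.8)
The plan is to prove separately that the set $\{\psi_wy_1^{c_1}\cdots y_n^{c_n}\}$ spans $\HH_{n}^{(0)}$ and that it is $K$-linearly independent; the second assertion of the lemma then follows by applying the anti-involution $*$. For spanning I would first push every $y$-generator to the right of every $\psi$-generator. Using $\psi_ry_k=y_k\psi_r$ for $k\neq r,r+1$ together with $y_r\psi_r=\psi_ry_{r+1}-1$ and $y_{r+1}\psi_r=\psi_ry_r+1$, one moves a single $y$ past a single $\psi$ at the cost of renaming the $y$ and possibly producing one extra term with one fewer $\psi$-factor; an induction on the number of $\psi$-factors then shows $\HH_{n}^{(0)}$ is spanned by elements $\psi_{i_1}\cdots\psi_{i_k}\,y_1^{c_1}\cdots y_n^{c_n}$. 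It remains to normalise the $\psi$-part: if the word $s_{i_1}\cdots s_{i_k}$ is not reduced in $\Sym_n$ then, by Tits' solution of the word problem, it can be carried using only the braid and commutation relations (which hold in $\HH_{n}^{(0)}$) to a word with a consecutive repetition $s_rs_r$, so that the product vanishes because $\psi_r^2=0$; and if $s_{i_1}\cdots s_{i_k}=w$ is reduced then, by Matsumoto's theorem, $\psi_{i_1}\cdots\psi_{i_k}$ depends only on $w$, which justifies writing it $\psi_w$ and completes the proof of spanning.

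For linear independence I would use the Demazure representation of $\HH_{n}^{(0)}$ on $R:=K[y_1,\dots,y_n]$, in which $y_i$ acts by multiplication and $\psi_r$ acts by the divided difference operator $\partial_r(f):=(f-s_rf)/(y_{r+1}-y_r)$, the group $\Sym_n$ permuting $y_1,\dots,y_n$; that these operators satisfy the defining relations of Definition \ref{nilHecke} is a routine verification. Since each $\partial_r$, hence each $\partial_w$, commutes with multiplication by a symmetric polynomial, every operator $y_1^{c_1}\cdots y_n^{c_n}\psi_w$ is linear over $R^{\Sym_n}$ and so extends to an $F$-linear endomorphism of $E$, where $E:=\mathrm{Frac}(R)$ and $F:=E^{\Sym_n}=\mathrm{Frac}(R^{\Sym_n})$. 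By Artin's theorem $E/F$ is Galois with group $\Sym_n$, hence $\dim_E\End_F(E)=[E:F]=n!$, and I would use the classical fact that the $n!$ divided difference operators $\{\partial_w\mid w\in\Sym_n\}$ form a left $E$-basis of $\End_F(E)$. To see this, expand a reduced expression for $w$ inside the twisted group algebra of $\Sym_n$ over $E$ (which is $\End_F(E)$ by Galois theory): one gets $\partial_w=c_w\,w+(\text{an }E\text{-linear combination of }\sigma\text{ with }\ell(\sigma)<\ell(w))$ with $c_w\in E^{\times}$, so the transition matrix from $\{\partial_w\}$ to $\Sym_n$ (acting on $E$) is triangular with respect to length and has invertible diagonal; since the automorphisms in $\Sym_n$ are $E$-linearly independent by Dedekind's lemma, so are the $\partial_w$, whence they form a basis. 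Therefore $\End_F(E)=\bigoplus_{w\in\Sym_n}E\partial_w$ as a left $E$-module, and a relation $\sum_{w,c}k_{w,c}\,y_1^{c_1}\cdots y_n^{c_n}\psi_w=0$ with $k_{w,c}\in K$ forces $\bigl(\sum_c k_{w,c}\,y_1^{c_1}\cdots y_n^{c_n}\bigr)\partial_w=0$ for each $w$, hence $\sum_c k_{w,c}\,y_1^{c_1}\cdots y_n^{c_n}=0$ in $R$ (as $\partial_w\neq0$ and $E$ is a field), hence all $k_{w,c}=0$ since distinct monomials are $K$-linearly independent in $R$. This proves that $\{y_1^{c_1}\cdots y_n^{c_n}\psi_w\}$ is linearly independent; applying $*$, which fixes each $y_k$ and (reversing products) sends $\psi_w$ to $\psi_{w^{-1}}$, gives the same for $\{\psi_wy_1^{c_1}\cdots y_n^{c_n}\}$. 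Combined with spanning, both sets are $K$-bases of $\HH_{n}^{(0)}$.

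The main obstacle is the linear independence, and within it the assertion that the divided difference operators form a left $E$-basis of $\End_F(E)$; the triangularity computation against $\Sym_n$ together with Dedekind's independence of field automorphisms is the real content, everything else being bookkeeping with the relations. An alternative that avoids the fraction field is a direct triangularity argument: apply the operators $y_1^{c_1}\cdots y_n^{c_n}\psi_w$ to the staircase monomials $y_1^{a_1}\cdots y_n^{a_n}$ with $0\le a_i\le n-i$ (which form an $R^{\Sym_n}$-basis of $R$) and read off the coefficients for a suitable monomial order; but the Galois-theoretic formulation seems cleanest. Finally, once one of the two bases is established the other is immediate from $*$, so effectively only one spanning statement and one independence statement need to be proved.
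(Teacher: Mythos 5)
Your argument is correct, but note that the paper does not prove this lemma at all: it is quoted verbatim from Khovanov--Lauda \cite{KhovLaud:diagI}, so there is no in-paper proof to compare against. What you have written is essentially the standard proof from that reference, made self-contained: spanning by straightening (push the $y$'s to the right using $y_r\psi_r=\psi_ry_{r+1}-1$, $y_{r+1}\psi_r=\psi_ry_r+1$, then normalise the $\psi$-word via Matsumoto for reduced words and Tits' lemma plus $\psi_r^2=0$ for non-reduced ones), and linear independence via faithfulness of the Demazure representation on $K[y_1,\dots,y_n]$. Your key step -- that the divided difference operators $\{\partial_w\}$ form a left $E$-basis of $\End_F(E)$, proved by expanding $\partial_w=c_w\,w+(\text{lower length terms})$ in the twisted group algebra and invoking Dedekind's independence of automorphisms -- is sound; the only point worth making explicit is that a proper subword of a reduced word represents an element of strictly smaller length, which is what makes the transition matrix genuinely triangular. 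Two small bookkeeping remarks: the spanning induction needs a secondary induction (e.g.\ on the number of $y$-before-$\psi$ disorders) since the main term of each exchange keeps the same number of $\psi$-factors and only the correction term loses one; and one should check (as you indicate) that $R\otimes_{R^{\Sym_n}}F=E$, which follows since $R$ is free of rank $n!$ over $R^{\Sym_n}$ and the tensor product is a domain of the right dimension over $F$. Neither is a gap.
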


According to Lemma \ref{standardbasis}, it is clear that $x_i\mapsto y_i (1\leq i\leq n)$
induce a $K$-algebra isomorphism between $K[x_1,\dots,x_n]$ and the subalgebra of $\HH_{n}^{(0)}$ generated by $y_1,\dots,y_n$.
Henceforth, $y_1,\dots,y_n$ are algebraically {\it independent} in $\HH_n^{(0)}$. So we can use $K[y_1,\dots,y_n]$ to denote
the subalgebra of $\HH_{n}^{(0)}$ generated by $y_1,\dots,y_n$.
We warn the readers that though we use the same symbols to denote both the generators in $\HH_{n}^{(0)}$ and in $\HH_{\ell,n}^{(0)}$ by some abuse of notations, it should be clear that they are elements in different algebras.
For example, $y_1,\cdots,y_n$ are algebraically {\it dependent} in $\HH_{\ell,n}^{(0)}$.

By the defining relations in Definition \ref{nilHecke}, for each $1\leq s<n$, one can easily check that
$$\psi_s(y_s+y_{s+1})=(y_s+y_{s+1})\psi_s,\quad \psi_s(y_sy_{s+1})=(y_sy_{s+1})\psi_s,\quad \psi_s y_k=y_k\psi_s,\quad\forall\,k\notin\{s,s+1\} .$$
As a consequence, for any $r,k\in\Z^{\geq 1}$ with $r+k\leq n$,
the symmetric polynomials in $y_r, y_{r+1}, \cdots, y_{r+k}$ commute with the elements $\psi_s$ with $s\in\N$ and $r\leq s<r+k$.
Furthermore, the center $Z(\HH_n^{(0)})$ of $\HH_{n}^{(0)}$ is the set of symmetric polynomials in $y_1,\cdots,y_n$.

Let $\Lambda_n$ be the symmetric polynomial ring with $n$ variables $x_1,\cdots,x_n$.
For each $0\leq k\leq n$, let $e_k\in\Lam_n$ be the $k$th elementary symmetric polynomials.
For each $s\geq 0$, let $h_s\in\Lam_n$ be the $s$th complete symmetric polynomials.
By some abuse of notation, we still denote by $\Lambda_n$ the evaluation of the symmetric polynomial
ring at $x_1:=y_1,\dots,x_n:=y_n$, that is the center $Z(\HH_n^{(0)})$ of $\HH_{n}^{(0)}$.

\begin{dfn}\label{SlamDef}
For each partition $\lambda=(\lambda_1,\dots,\lambda_n)\in\P_{n}$, we define an element inside $\HH_{n}^{(0)}$ as follows:
$$S_{\lambda}:=(-1)^{\tfrac{n(n-1)}{2}}\psi_{w_{0}}y_1^{\lambda_1+n-1}y_2^{\lambda_2+n-2}\cdots y_n^{\lambda_n}.$$
For a partition $\mu$ with length $\ell(\mu)>n$, we define $S_{\mu}:=0$, in particular, $S_{(1^s)}=0$ if $s>n$. We also
define $S_{(s)}:=0$, $S_{(1^{s})}:=0$ if $s<0$.
\end{dfn}

We define $J_n$ to be the left ideal of $\HH_{n}^{(0)}$ generated by $\psi_1,\dots,\psi_{n-1}$
and $J_n^*$ to be the image of $J_n$ under the anti-involution $*$. That is,
\begin{equation}\label{2Jn}
J_n=\sum_{s=1}^{n-1}\HH_{n}^{(0)}\psi_{s},\quad J^*_n=\sum_{s=1}^{n-1}\psi_s\HH_{n}^{(0)}.
\end{equation}

\begin{lem}\label{decompositions as K-linear space}
We have $K$-linear space decompositions,
$$\HH_{n}^{(0)}=K[y_1,\dots,y_n]\oplus J_n,\quad \HH_{n}^{(0)}=K[y_1,\dots,y_n]\oplus J^*_n$$
\end{lem}
\begin{proof}
It is clear that $\HH_{n}^{(0)}=K[y_1,\dots,y_n]+J_n$. According to the defining relations in Definition \ref{nilHecke} and the definition of $\psi_w$,
for any $1\leq s<n$ and $w\in\Sym_n$,
\begin{align}\label{usefull formula}
\psi_{s}\psi_w=\begin{cases}
\psi_{sw},&\text{ if }\ell(sw)=\ell(w)+1,\\
0,&\text{ if }\ell(sw)=\ell(w)-1,
\end{cases}\quad
\psi_{w}\psi_s=\begin{cases}
\psi_{ws},&\text{ if }\ell(ws)=\ell(w)+1,\\
0,&\text{ if }\ell(ws)=\ell(w)-1.
\end{cases}
\end{align}
Therefore, according to Lemma \ref{standardbasis}, elements in $J_n$ can be expressed as the $K$-linear sums of
elements $y_1^{c_1}\cdots y_n^{c_n}\psi_w$ with $w\neq 1$ and $c_1,\cdots,c_n\in\N$.
Hence $K[y_1,\dots,y_n]\cap J_n=\{0\}$ and  $\HH_{n}^{(0)}=K[y_1,\dots,y_n]\oplus J_n$.
For the second decomposition, we just need to apply $*$ to the first decomposition.
\end{proof}

As a consequence, for each $a\in \HH_{n}^{(0)}$, there is a unique
element $f_a\in K[y_1,\cdots,y_n]$ and a unique element $g_a\in K[y_1,\cdots,y_n]$ such that
$$a\equiv f_a\mod J_n,\qquad a\equiv g_a\mod J^*_n.$$
We call $f_a$ the polynomial part of $a$. Applying $*$ to the first equivalence,
by the uniqueness of $g_{a^*}$, we derive $f_a=g_{a^*}$.

Hence we have the following definition.
\begin{dfn}\label{zlamDef}
Let $\lam=(\lambda_1,\dots,\lambda_n)\in\P_{n}$. Inside $\HH_{n}^{(0)}$,
we define $z_{\lam}$ to be the unique polynomial in $K[y_1,\cdots,y_n]$ such that
$$S_{\lam}\equiv z_{\lam}\mod J_n.$$
\end{dfn}

\begin{dfn}[\cite{Man}]\label{Schur polynomial}
For each partition $\lambda=(\lam_1,\dots,\lam_n)\in\P_n$, set
$$a_{\lambda}(x_1,\cdots,x_n):=\sum_{w\in\Sym_n}(-1)^{\ell(w)}x_{w(1)}^{\lambda_1}\cdots x_{w(n)}^{\lam_n},$$
the Schur polynomial associated to $\lambda$ is $s_{\lambda}(x_1,\cdots,x_n):=\tfrac{a_{\lambda+\delta}(x_1,\cdots,x_n)}{a_{\delta}(x_1,\cdots,x_n)}$
where $\delta=(n-1,n-2,\dots,1,0)$.
\end{dfn}

It is clear that the elementary symmetric polynomials $e_k\in\Lam_n(0\leq k\leq n)$
and the complete symmetric polynomials $h_t\in\Lam_n(t\geq 0)$ are special Schur polynomials by
$$e_{k}(x_1,\dots,x_{n})=s_{(1^k)}(x_1,\dots,x_n),\quad h_{t}(x_1,\dots,x_n)=s_{(t)}(x_1,\dots,x_n).$$
For each $\lam=(\lam_1,\dots,\lam_n)\in\P_n$, it is well known that
$$(-1)^{\frac{n(n-1)}{2}}\partial_{w_0}(x_1^{\lam_1+n-1}x_2^{\lam_2+n-2}\cdots x_{n}^{\lam_n})=s_{\lam}(x_1,\dots,x_n),$$
refer to \cite[(2.57)]{KLMS}. We remark readers that the $K$-linear map $D_n$ defined in \cite{KLMS}
is equal to $(-1)^{\frac{n(n-1)}{2}}\partial_{w_0}$ in our notations.
For more explicit definition and properties of Schur polynomials, we refer the readers to Chapter 1 of Manivel's book \cite{Man}.

\begin{cor}\label{the equivalence between zlam and Schur polynomial}
Let $1$ be the identity element of $K[x_1,\dots,x_n]$. For each $\lam=(\lam_1,\dots,\lam_n)\in\P_n$,
we have $$S_{\lam}\cdot 1=s_{\lam}(x_1,\dots,x_n).$$
Hence $z_{\lam}$ is the evaluation of $s_{\lam}(x_1,\dots,x_n)$ at $x_1:=y_1,\dots,x_n:=y_n$,
that is,
$$(-1)^{\tfrac{n(n-1)}{2}}\psi_{w_{0}}y_1^{\lambda_1+n-1}y_2^{\lambda_2+n-2}\cdots y_n^{\lambda_n}\equiv s_{\lam}(y_1,\dots,y_n)\mod J_n$$
and
$$(-1)^{\tfrac{n(n-1)}{2}}y_1^{\lambda_1+n-1}y_2^{\lambda_2+n-2}\cdots y_n^{\lambda_n}\psi_{w_0}\equiv s_{\lam}(y_1,\dots,y_n)\mod J^*_n.$$
\end{cor}
\begin{proof}
Observing that $\psi_s\cdot 1=0,\,\forall 1\leq s<n$, we derive
$$z_{\lam}\cdot 1=S_\lam\cdot 1=(-1)^{\frac{n(n-1)}{2}}\partial_{w_0}(x_1^{\lam_1+n-1}x_2^{\lam_2+n-2}\cdots x_{n}^{\lam_n})=s_{\lam}(x_1,\dots,x_n).$$
Hence $s_{\lam}(y_1,\dots,y_n)=z_{\lam}$.
\end{proof}

According to Definition \ref{injective}, Definition \ref{SlamDef} and Definition \ref{zlamDef}, for each $\blam=(k_1,\dots,k_n)\in\P_{\ell,n}$,
$$S_{\rho(\blam)}=(-1)^{\frac{n(n-1)}{2}}\psi_{w_0}y_1^{\ell-k_1}y_2^{\ell-k_2}\cdots y_{n}^{\ell-k_n}\equiv z_{\rho(\blam)}\mod J_n.$$
Applying $*$ to both sides of the equivalence, we obtain
$$(-1)^{\frac{n(n-1)}{2}}y_1^{\ell-k_1}y_2^{\ell-k_2}\cdots y_{n}^{\ell-k_n}\psi_{w_0}\equiv z_{\rho(\blam)}\mod J^*_n.$$
We set $z(\blam):=\pi_\ell\bigl(z_{\rho(\blam)}\bigr)\in\HH_{\ell,n}^{(0)}$.
Hence the elements $z(\blam)$ in our notations here are equal to $(-1)^{\tfrac{n(n-1)}{2}}z_{\blam}$
defined in \cite[Definition 3.3]{HuLiang}.

\begin{thm}(\cite[Theorem 3.7]{HuLiang})\label{centerbasis}
The elements in the set $\{z(\blam)\mid\blam\in\P_{\ell,n}\}$ form a $K$-basis of the center $Z(\HH_{\ell,n}^{(0)})$ of $\HH_{\ell,n}^{(0)}$.
In particular, the center of $\HH_{\ell,n}^{(0)}$ is the set of symmetric polynomials in $y_1,\dots,y_n$.
\end{thm}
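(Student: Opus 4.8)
\emph{Overview and centrality.} The statement bundles three claims: each $z_{\rho(\blam)}$ is central; $Z$ equals $\pi_\ell\bigl(K[y_1,\dots,y_n]^{\Sym_n}\bigr)$; and the $\binom{\ell}{n}$ elements $z_{\rho(\blam)}$ ($\blam\in\P_{\ell,n}$) form a $K$-basis of this space. Centrality is immediate: by the argument of \cite[Lemma 3.4]{HuLiang}, $z(\rho(\blam))$ is a symmetric polynomial in $y_1,\dots,y_n$, hence lies in the center of $\HH_{n}^{(0)}$ by Lemma \ref{centercommurel1}, and applying the surjection $\pi_\ell$ puts $z_{\rho(\blam)}$ in $Z$. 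So $\pi_\ell\bigl(K[y]^{\Sym_n}\bigr)\subseteq Z$ and $\{z_{\rho(\blam)}\}\subseteq\pi_\ell\bigl(K[y]^{\Sym_n}\bigr)$; the plan is to establish the reverse inclusion $Z\subseteq\pi_\ell\bigl(K[y]^{\Sym_n}\bigr)$ by an idempotent‑truncation argument, and then to settle the basis claim by a spanning‑plus‑dimension argument.

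\emph{Reduction via a full idempotent.} Put $y^\delta:=y_1^{n-1}y_2^{n-2}\cdots y_{n-1}$, $e:=(-1)^{n(n-1)/2}y^\delta\psi_{w_{0,n}}\in\HH_{n}^{(0)}$, and $\bar e:=\pi_\ell(e)$. Since $w_{0,n}$ is longest, $w_{0,n}s_r<w_{0,n}$, so $\psi_{w_{0,n}}\psi_r=0$ for all $r$; combined with Lemma \ref{commutation1} (which gives $y^\delta\psi_{w_{0,n}}\equiv(-1)^{n(n-1)/2}\bmod J_n$, hence $\psi_{w_{0,n}}y^\delta\psi_{w_{0,n}}=(-1)^{n(n-1)/2}\psi_{w_{0,n}}$) a direct check yields $e^2=e$. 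It is a basic fact --- part of Lauda's matrix description of $\HH_{\ell,n}^{(0)}$, see also \cite{HuLiang} --- that $\HH_{n}^{(0)}e\HH_{n}^{(0)}=\HH_{n}^{(0)}$ with $e\HH_{n}^{(0)}e\cong K[y_1,\dots,y_n]^{\Sym_n}$; applying $\pi_\ell$ gives $\HH_{\ell,n}^{(0)}\bar e\HH_{\ell,n}^{(0)}=\HH_{\ell,n}^{(0)}$, i.e.\ $\bar e$ is a full idempotent, so $\HH_{\ell,n}^{(0)}$ is Morita equivalent to $\bar e\HH_{\ell,n}^{(0)}\bar e$. The identity $\psi_{w_{0,n}}\psi_r=0$ also lets one rewrite $\psi_{w_{0,n}}g\psi_{w_{0,n}}=p_g\,\psi_{w_{0,n}}$ for every $g\in\HH_{n}^{(0)}$, where $p_g$ is the reduction of $g\psi_{w_{0,n}}$ modulo $J_n$, a symmetric (hence central) polynomial in $y_1,\dots,y_n$ by the argument of \cite[Lemma 3.4]{HuLiang}; feeding this into $\bar e\,\HH_{\ell,n}^{(0)}\,\bar e$ gives $\bar e\,\HH_{\ell,n}^{(0)}\,\bar e=\pi_\ell\bigl(K[y_1,\dots,y_n]^{\Sym_n}\bigr)\bar e$, which is commutative. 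Morita equivalence then yields a $K$-algebra isomorphism $Z\xrightarrow{\ \sim\ }Z\bigl(\bar e\HH_{\ell,n}^{(0)}\bar e\bigr)=\bar e\HH_{\ell,n}^{(0)}\bar e$, $z\mapsto z\bar e$; this map is injective by fullness of $\bar e$ and has image $\pi_\ell\bigl(K[y]^{\Sym_n}\bigr)\bar e$, so $Z=\pi_\ell\bigl(K[y_1,\dots,y_n]^{\Sym_n}\bigr)$ --- the ``in particular'' clause --- and in particular $\dim_K Z=\dim_K\pi_\ell\bigl(K[y]^{\Sym_n}\bigr)$.

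\emph{Schur polynomials, spanning, and dimension.} By the classical bialternant identity, the reduction modulo $J_n$ of $y_1^{\lambda_1+n-1}y_2^{\lambda_2+n-2}\cdots y_n^{\lambda_n}\psi_{w_{0,n}}$ is, up to sign, the Schur polynomial $s_\lambda(y_1,\dots,y_n)$ for every partition $\lambda$ with $\ell(\lambda)\le n$; hence $z(\lambda)=\pm s_\lambda(y_1,\dots,y_n)$ and $z_{\rho(\blam)}=\pm\,\pi_\ell\bigl(s_{\rho(\blam)}(y_1,\dots,y_n)\bigr)$. Under $\rho$ the set $\P_{\ell,n}$ corresponds to the partitions contained in the $n\times(\ell-n)$ rectangle, and since $\{s_\mu(y_1,\dots,y_n)\mid\ell(\mu)\le n\}$ is a $\Z$-basis of $K[y]^{\Sym_n}$, the elements $\{z_{\rho(\blam)}\}$ span $Z=\pi_\ell\bigl(K[y]^{\Sym_n}\bigr)$ once one knows that $\pi_\ell\bigl(s_\mu(y)\bigr)$ for $\mu$ \emph{not} contained in the rectangle is a $K$-linear combination of the $\pi_\ell\bigl(s_\kappa(y)\bigr)$ with $\kappa$ contained in it --- the ``straightening''/Pieri calculus inside $\HH_{\ell,n}^{(0)}$. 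Finally, a spanning set of $\binom{\ell}{n}=|\P_{\ell,n}|$ elements is a basis exactly when $\dim_K Z\ge\binom{\ell}{n}$, equivalently when the rectangle-indexed Schur polynomials stay $K$-linearly independent in $\HH_{\ell,n}^{(0)}$; this holds because $\dim_K\HH_{\ell,n}^{(0)}=(n!)^2\binom{\ell}{n}$ (from the monomial basis of \cite{HuLiang}) forces $\dim_K\bigl(\bar e\HH_{\ell,n}^{(0)}\bar e\bigr)=\binom{\ell}{n}$ via the Morita equivalence, hence $\dim_K Z=\binom{\ell}{n}$. Therefore $\{z_{\rho(\blam)}\mid\blam\in\P_{\ell,n}\}$ is a $K$-basis of $Z$.

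\emph{The main obstacle.} The single genuinely substantial step is the last: computing $\dim_K\pi_\ell\bigl(K[y_1,\dots,y_n]^{\Sym_n}\bigr)$, equivalently identifying $\ker\pi_\ell\cap K[y]^{\Sym_n}$ with the ideal generated by the complete symmetric functions $h_{\ell-n+1}(y),\dots,h_\ell(y)$ --- the Borel presentation of $\Hc^*(\mathbb{G}_{n,\ell-n},K)$ realized inside $\HH_{\ell,n}^{(0)}$ --- which also supplies the straightening used above. This requires real control of the two-sided ideal $\langle y_1^\ell\rangle$ in $\HH_{n}^{(0)}$; the combinatorial core is the iterated divided-difference computation $\partial_{n-1}\cdots\partial_2\partial_1\bigl(y_1^\ell\bigr)=h_{\ell-n+1}(y_1,\dots,y_n)$ together with the fact --- for which the monomial basis of \cite{HuLiang} is the efficient tool --- that these symmetric functions, and nothing more, are forced to vanish. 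Everything else (centrality, the idempotent computations, the Morita bookkeeping, the bialternant identity) is routine by comparison.
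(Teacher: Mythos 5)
This theorem is not proved in the paper at all: it is imported verbatim from \cite[Theorem 3.7]{HuLiang}, and the surrounding text only records the ingredients (Definition \ref{zlamDef}, the remark that $z(\lam)$ is symmetric hence central, and \cite[Lemma 3.2]{HuLiang} quoted after Definition \ref{blamDFN}). So there is no internal proof to compare you against, and I can only judge your argument on its own terms. Your skeleton --- centrality of symmetric polynomials via Lemma \ref{centercommurel1}, truncation by the idempotent $e=(-1)^{n(n-1)/2}y_{\min}\psi_{w_{0,n}}$, transfer of the center to the corner algebra, and a Schur-polynomial spanning-plus-dimension count --- is the standard route, and several steps are correct as written: $e^2=e$ does follow from Lemma \ref{commutation1} together with $\psi_{w_{0,n}}\psi_s=0=\psi_s\psi_{w_{0,n}}$, and the reduction $Z=\pi_\ell\bigl(K[y_1,\dots,y_n]^{\Sym_n}\bigr)$ is sound once fullness of $\bar e$ and $\bar e\HH_{\ell,n}^{(0)}\bar e=\pi_\ell(K[y]^{\Sym_n})\bar e$ are granted.

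The proposal nevertheless has a genuine gap, located exactly where you place "the main obstacle,'' and it is not closed. First, the inference ``$\dim_K\HH_{\ell,n}^{(0)}=(n!)^2\binom{\ell}{n}$ forces $\dim_K\bigl(\bar e\HH_{\ell,n}^{(0)}\bar e\bigr)=\binom{\ell}{n}$ via the Morita equivalence'' is invalid as stated: Morita equivalence determines no dimensions. To divide by $(n!)^2$ you must first prove $\HH_{\ell,n}^{(0)}\cong\mathrm{Mat}_{n!}\bigl(\bar e\HH_{\ell,n}^{(0)}\bar e\bigr)$, i.e.\ that $\HH_{\ell,n}^{(0)}\bar e$ is free of rank $n!$ over the corner algebra --- this is precisely Lauda's theorem and is not a formal consequence of $\bar e$ being full. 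Second, the identification of $\ker\pi_\ell\cap K[y_1,\dots,y_n]^{\Sym_n}$ with the ideal $(h_{\ell-n+1},\dots,h_\ell)$, on which both your straightening (spanning) step and the linear independence of the rectangle-indexed Schur polynomials rest, is only gestured at through a divided-difference identity. Since the decisive computation is deferred, what you have is a correct outline rather than a proof. Note that the lemmas already quoted in this paper yield a much shorter argument that bypasses both issues: from $S_{\rho(\blam)}\equiv z(\rho(\blam))\bmod J_n$ and $\psi_{w_{0,n}}\psi_s=0$ one gets
\begin{equation*}
\psi_{w_{0,n}}\,z_{\rho(\blam)}\,y_{\min}=(-1)^{\tfrac{n(n-1)}{2}}\,b(\blam)\quad\text{in }\HH_{\ell,n}^{(0)},
\end{equation*}
so linear independence of the $z_{\rho(\blam)}$ follows from that of the $b(\blam)$, and \cite[Lemma 3.2]{HuLiang} (basic algebra $\cong Z$, with basis $\{b(\blam)\}$) gives $\dim_KZ=|\P_{\ell,n}|$, whence the spanning and the ``in particular'' clause are immediate.
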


Combining Theorem \ref{centerbasis} and Corollary \ref{the equivalence between zlam and Schur polynomial}, we get that

\begin{cor}\label{maincor1}
For each $\blam\in\P_{\ell,n}$, inside $\HH_{\ell,n}^{(0)}$, $z(\blam)=s_{\rho(\blam)}(y_1,\cdots,y_n)$. In particular, the elements in the set $\{s_{\rho(\blam)}(y_1,\cdots,y_n)\mid\blam\in\P_{\ell,n}\}$ form a $K$-basis of the center $Z(\HH_{\ell,n}^{(0)})$ of $\HH_{\ell,n}^{(0)}$.
\end{cor}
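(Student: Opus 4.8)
The plan is to read off the corollary directly from the two preceding results, Theorem \ref{centerbasis} and Theorem \ref{maintheorem1}, once we note that the projection $\pi_\ell$ is compatible with the evaluation of polynomials at the $y$-generators.

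First I would unwind the definitions. By Definition \ref{zlamDef}, for any $\blam\in\P_{\ell,n}$ the element $z_{\rho(\blam)}\in\HH_{\ell,n}^{(0)}$ equals $\pi_\ell\bigl(z(\rho(\blam))\bigr)$, where $z(\rho(\blam))\in K[y_1,\dots,y_n]\subseteq\HH_{n}^{(0)}$ is the $y$-polynomial part of $S_{\rho(\blam)}$. Since $\rho$ maps $\P_{\ell,n}$ into $\P_n$, Theorem \ref{maintheorem1} applies to $\lambda=\rho(\blam)$ and gives $z(\rho(\blam))=s_{\rho(\blam)}(y_1,\dots,y_n)$ as an identity of elements of $\HH_{n}^{(0)}$. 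Because $\pi_\ell$ is the $K$-algebra surjection sending each $y_k$ to $y_k$, applying it to this identity yields $z_{\rho(\blam)}=s_{\rho(\blam)}(y_1,\dots,y_n)$ in $\HH_{\ell,n}^{(0)}$, where the right-hand side is the evaluation of the Schur polynomial $s_{\rho(\blam)}(x_1,\dots,x_n)$ at $x_1:=y_1,\dots,x_n:=y_n$ in the cyclotomic quotient. This proves the first assertion.

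For the second assertion I would simply invoke Theorem \ref{centerbasis}, according to which $\{z_{\rho(\blam)}\mid\blam\in\P_{\ell,n}\}$ is a $K$-basis of the center $Z=Z(\HH_{\ell,n}^{(0)})$; substituting the identity just obtained shows that $\{s_{\rho(\blam)}(y_1,\dots,y_n)\mid\blam\in\P_{\ell,n}\}$ is a $K$-basis of $Z$.

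There is no genuine obstacle, since both inputs are already established; the only point meriting a word of care is the interchange of $\pi_\ell$ with polynomial evaluation, which is immediate from the fact that $\pi_\ell$ is a $K$-algebra homomorphism fixing the generators $y_1,\dots,y_n$.
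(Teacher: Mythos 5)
Your argument is exactly the paper's: the corollary is obtained by combining Theorem \ref{centerbasis} with Theorem \ref{maintheorem1}, applying the latter to $\lambda=\rho(\blam)$ and pushing the resulting identity through $\pi_\ell$. The extra remark about $\pi_\ell$ commuting with polynomial evaluation is a correct (if implicit in the paper) point of care, and nothing is missing.
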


\bigskip

\section{The Schubert class basis $\{(a_1,\cdots,a_{n})\}$ versus the basis $\{b(\blam)\}$}

In this section, we shall give the proof of the main result Proposition \ref{mainthprop2}.
The proof relies on both the Jacobi-Trudi formula for the elements $\{b_{\lambda}\mid\lam\in\P_n\}$
and the Giambelli formula for the Schubert class basis elements $\{(a_1,\cdots,a_{n})\mid(a_1,\dots,a_n)\in\Theta_{\ell,n}\}$.
\medskip

Recall that $\mathbb{G}_{n,\ell}$ is the complex Grassmann manifold which consists of the set of $n$-dimensional subspaces of $\mathbb{C}^{\ell}$.
Let $0=V_0\subset V_1\subset V_2\subset\cdots\subset V_{\ell}=\mathbb{C}^{\ell}$ be a fixed complete flag in $\mathbb{C}^\ell$, i.e., each $V_i$ is an $i$-dimensional $\mathbb{C}$-linear subspace of $\mathbb{C}^\ell$.

Recall that by (\ref{thetaLN}), $$
\Theta_{\ell,n}:=\bigl\{(a_1,\cdots,a_{n})\bigm|0\leq a_1\leq\cdots\leq a_{n}\leq \ell-n, a_i\in\Z,\forall\,i\bigr\} .
$$
For each $n$-tuple $(a_1,\dots,a_{n})\in\Theta_{\ell,n}$, we define $$
\<a_1,\cdots,a_{n}\>:=\bigl\{X\in\mathbb{G}_{n,\ell}\bigm|\dim(X\cap V_{a_i+i})=i,\,\dim(X\cap V_{a_i+i-1})=i-1,\,\,\forall\,1\leq i\leq n\bigr\} .
$$
Let $[a_1,\cdots,a_{n}]:=\overline{\<a_1,\cdots,a_{n}\>}$ be the closure of $\<a_1,\cdots,a_{n}\>$.
Then it is well-known that $$
[a_1,\cdots,a_{n}]=\bigl\{X\in\mathbb{G}_{n,\ell}\bigm|\dim(X\cap V_{a_i+i})\geq i,\,\,\forall\,1\leq i\leq n\bigr\}.
$$

\begin{lem}\text{(\cite[(3.1)]{Hi})}
The integral homology $\Hc_*(\mathbb{G}_{n,\ell},\Z)$ is a free $\Z$-module of finite rank with a $\Z$-basis given by
$\{[a_1,\cdots,a_{n}]\mid(a_1,\dots,a_{n})\in\Theta_{\ell,n}\}$.
\end{lem}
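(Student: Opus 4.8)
The plan is to recall the classical Schubert cell decomposition of $\mathbb{G}_{n,\ell-n}$ relative to the fixed flag $0=V_0\subset V_1\subset\cdots\subset V_\ell=\C^\ell$ and to exploit that it involves only even-dimensional cells; since the statement is \cite[(3.1)]{Hi}, this is really a recollection rather than a new argument.

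First I would verify that the subsets $\<a_1,\cdots,a_{n}\>$, as $(a_1,\dots,a_{n})$ ranges over $\Theta_{\ell,n}$, partition $\mathbb{G}_{n,\ell-n}$. For $X\in\mathbb{G}_{n,\ell-n}$ the function $i\mapsto\dim(X\cap V_i)$ is non-decreasing, jumps by at most $1$ at each step, and runs from $0$ to $n$; hence its jump set $\{\,i\mid\dim(X\cap V_i)>\dim(X\cap V_{i-1})\,\}$ is an $n$-element subset $\{j_1<\cdots<j_n\}$ of $\{1,\dots,\ell\}$, and writing $j_i=a_i+i$ yields the unique $(a_1,\dots,a_{n})\in\Theta_{\ell,n}$ with $X\in\<a_1,\cdots,a_{n}\>$. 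In particular this sets up a bijection between $\Theta_{\ell,n}$ and the set of $n$-element subsets of $\{1,\dots,\ell\}$, so there are exactly $\binom{\ell}{n}$ cells.

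Next I would show, by the standard row-reduction argument (adapt a basis of $\C^\ell$ to the flag and put a matrix whose row span is $X$ into reduced echelon form), that each $\<a_1,\cdots,a_{n}\>$ is a locally closed subset of $\mathbb{G}_{n,\ell-n}$ isomorphic as a variety to an affine space of complex dimension $a_1+\cdots+a_{n}=\sum_i(j_i-i)$; in particular each cell has even real dimension. The step I expect to be the main obstacle is the precise description of the closure: using the description $[a_1,\cdots,a_{n}]=\{\,X\mid\dim(X\cap V_{a_i+i})\geq i\ \forall i\,\}$ already recorded above, a limiting/degeneration argument (or a dimension count via the stratification by pivot sets) shows that $[a_1,\cdots,a_{n}]$ is the disjoint union of the cells $\<b_1,\cdots,b_{n}\>$ over those $(b_1,\dots,b_{n})\in\Theta_{\ell,n}$ with $b_i\le a_i$ for all $i$. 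Hence the cells $\<a_1,\cdots,a_{n}\>$ make $\mathbb{G}_{n,\ell-n}$ into a finite CW complex with one cell of real dimension $2(a_1+\cdots+a_n)$ for each $(a_1,\dots,a_{n})\in\Theta_{\ell,n}$, and each closed Schubert variety $[a_1,\cdots,a_{n}]$, being irreducible projective of complex dimension $\sum_i a_i$, carries a fundamental class.

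Finally I would invoke the general fact that the cellular chain complex of a CW complex whose cells all have even dimension has all boundary maps equal to zero, since $C_k=0$ for $k$ odd, so each $\partial_k$ has either source or target the zero group. Therefore $\Hc_*(\mathbb{G}_{n,\ell-n},\Z)$ is a free $\Z$-module of rank $|\Theta_{\ell,n}|=\binom{\ell}{n}$, with a basis indexed by the cells, and under the canonical isomorphism between cellular and singular homology the basis element attached to the cell $\<a_1,\cdots,a_{n}\>$ is exactly the Schubert variety class $[a_1,\cdots,a_{n}]$, which is the assertion of the lemma. (Alternatively one may simply cite \cite[(3.1)]{Hi}; the sketch above is the content of that reference.)
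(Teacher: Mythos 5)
Your sketch is correct: it is the standard Schubert cell decomposition argument (even-dimensional cells, hence vanishing cellular differentials, hence free homology with basis the fundamental classes $[a_1,\cdots,a_{n}]$), which is precisely the content of the cited reference \cite[(3.1)]{Hi}. The paper itself offers no independent proof of this lemma and simply quotes Hiller, so there is nothing to compare beyond noting that your recollection of the classical argument is accurate and complete.
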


Recall that $$
\Hc^{2\sum_{j=1}^{n}a_j}(\mathbb{G}_{n,\ell},\Z)\cong\Hom_{\Z}\bigl(\Hc_{2\sum_{j=1}^{n}a_j}(\mathbb{G}_{n,\ell},\Z),\Z\bigr) .
$$
By some abuse of notation, we define $(a_1,\cdots,a_{n})\in \Hc^{2\sum_{j=1}^{n}a_j}(\mathbb{G}_{n,\ell},\Z)$ by assigning $1$ to $[a_1,\cdots,a_{n}]$
and $0$ to any other $[b_1,\cdots,b_{n}]\neq [a_1,\cdots,a_{n}]$.
There are no confusions in $(a_1,\dots,a_n)$ by the context.

\begin{lem}\label{basis2} The elements in the following set $$
\bigl\{(a_1,\cdots,a_n)\bigm|(a_1,\cdots,a_n)\in\Theta_{\ell,n}\bigr\}
$$
form a $\Z$-basis of $\Hc^*(\mathbb{G}_{n,\ell},\Z)$.
\end{lem}
We call each $(a_1,\cdots,a_{n})$ a Schubert class basis element in $\Hc^{2\sum_{j=1}^{n}a_j}(\mathbb{G}_{n,\ell},\Z)$. Following \cite{Hi}, we know that the Chern classes and the normal Chern classes can be identified as the special Schubert classes as follows: $$\begin{aligned}
&c_i:=(0,\dots,0,\underbrace{1,\dots,1}_{\text{$i$ copies}})\in\Hc^*(\mathbb{G}_{n,\ell},\Z),\quad \forall\,1\leq i\leq n,\\
&\bar{c}_j:=(-1)^j\tilde{c}_j\in\Hc^*(\mathbb{G}_{n,\ell},\Z),\text{ where }\tilde{c}_j:=(0,\dots,0,j)\in\Hc^*(\mathbb{G}_{n,\ell},\Z),\quad
\forall\,1\leq j\leq \ell-n.
\end{aligned}
 $$

\begin{thm}\text{(Borel, \cite{Bor})}\label{borel} Let $x_1,\dots,x_{n},\bar{x}_1,\dots,\bar{x}_{\ell-n}$ be $\ell$-indeterminates over $\Z$. The map $\iota$ which sends $c_i$ to $x_i+I_{n,\ell-n}$ for each $1\leq i\leq n$, and $\bar{c}_j$ to $\bar{x}_j+I_{n,\ell-n}$ for each $1\leq j\leq \ell-n$, can be extended uniquely to an $\Z$-algebra isomorphism $\iota: \Hc^*(\mathbb{G}_{n,\ell},\Z)\cong \mathbb{Z}[x_1,\dots,x_{n},\bar{x}_1,\dots,\bar{x}_{\ell-n}]/I_{n,\ell-n}$,
where $I_{n,\ell-n}$ is the ideal generated by the coefficients of the following equation
$$(1+x_1t+\dots+x_{n}t^{n})(1+\bar{x}_1t+\dots+\bar{x}_{\ell-n}t^{\ell-n})=1.$$
\end{thm}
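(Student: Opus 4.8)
The plan is to construct the inverse of the asserted map explicitly and then verify it is an isomorphism, using three classical inputs: the Whitney sum formula, the Giambelli formula for Schubert classes, and a rank count for the resulting quotient ring. Write $R:=\mathbb{Z}[x_1,\dots,x_{n},\bar{x}_1,\dots,\bar{x}_{\ell-n}]$, and let $S$, $Q$ be the tautological sub- and quotient bundles on $\mathbb{G}_{n,\ell-n}$, fitting in the exact sequence $0\to S\to\underline{\C^{\ell}}\to Q\to0$ of $\C$-vector bundles. Recall from \cite[Chapter III]{Hi} that, in the conventions used there, $1+c_1t+\dots+c_{n}t^{n}$ and $1+\bar{c}_1t+\dots+\bar{c}_{\ell-n}t^{\ell-n}$ are the total Chern classes (in the formal variable $t$) of $S^{\vee}$ and $Q^{\vee}$ respectively; the twist $\bar{c}_j=(-1)^j\tilde{c}_j$ is exactly what converts the Chern classes of $Q$ into those of $Q^{\vee}$. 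First I would define the $\mathbb{Z}$-algebra homomorphism $\Phi\colon R\to\Hc^*(\mathbb{G}_{n,\ell-n},\Z)$ by $x_i\mapsto c_i$ and $\bar{x}_j\mapsto\bar{c}_j$, which is well defined since $R$ is a free commutative $\mathbb{Z}$-algebra.

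Next I would show that $\Phi$ annihilates $I_{n,\ell-n}$. Its generators are the coefficients of $t^{1},\dots,t^{\ell}$ in $(1+x_1t+\dots+x_{n}t^{n})(1+\bar{x}_1t+\dots+\bar{x}_{\ell-n}t^{\ell-n})-1$; applying $\Phi$ turns this polynomial into $c(S^{\vee})c(Q^{\vee})-1$. Dualizing the tautological sequence gives $0\to Q^{\vee}\to\underline{\C^{\ell}}\to S^{\vee}\to0$, so the Whitney sum formula yields $c(S^{\vee})c(Q^{\vee})=c(\underline{\C^{\ell}})=1$. Hence $\Phi(I_{n,\ell-n})=0$ and $\Phi$ descends to a $\mathbb{Z}$-algebra homomorphism $\overline{\Phi}\colon R/I_{n,\ell-n}\to\Hc^*(\mathbb{G}_{n,\ell-n},\Z)$.

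For surjectivity of $\overline{\Phi}$ I would invoke the Giambelli formula (\cite[Chapter III]{Hi}): every Schubert class basis element $(a_1,\dots,a_{n})$ is a polynomial, indeed a determinant, in the special Schubert classes $c_1,\dots,c_{n}$, hence lies in $\im\Phi$. Since the elements $(a_1,\dots,a_{n})$ with $(a_1,\dots,a_{n})\in\Theta_{\ell,n}$ form a $\mathbb{Z}$-basis of $\Hc^*(\mathbb{G}_{n,\ell-n},\Z)$ by Lemma \ref{basis2}, it follows that $\overline{\Phi}$ is surjective.

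Finally, to promote surjectivity to bijectivity I would carry out a rank count in $R/I_{n,\ell-n}$. The relations coming from the coefficients of $t^{1},\dots,t^{\ell-n}$ determine each $\bar{x}_k$ recursively via $\bar{x}_k\equiv-\sum_{i=1}^{\min(k,n)}x_i\bar{x}_{k-i}\pmod{I_{n,\ell-n}}$ (with $\bar{x}_0:=1$), so $R/I_{n,\ell-n}$ is generated as a $\mathbb{Z}$-algebra by $x_1,\dots,x_{n}$; identifying $x_i$ with the $i$th elementary symmetric polynomial $e_i$ in $n$ auxiliary variables, $\bar{x}_k$ is identified with $(-1)^{k}h_k$, and the remaining relations (coefficients of $t^{\ell-n+1},\dots,t^{\ell}$) become $h_{\ell-n+1}=\dots=h_{\ell}=0$. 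Thus $R/I_{n,\ell-n}\cong\mathbb{Z}[x_1,\dots,x_{n}]/(h_{\ell-n+1},\dots,h_{\ell})$, the standard presentation whose underlying $\mathbb{Z}$-module is free with basis the Schur polynomials $s_{\lambda}(x_1,\dots,x_{n})$ over partitions $\lambda$ with $\ell(\lambda)\leq n$ and $\lambda_1\leq\ell-n$ (see \cite{Mac}, \cite{Man}), hence of rank $\binom{\ell}{n}=|\Theta_{\ell,n}|$. Therefore $R/I_{n,\ell-n}$ and $\Hc^*(\mathbb{G}_{n,\ell-n},\Z)$ are free $\mathbb{Z}$-modules of the same finite rank, so the surjection $\overline{\Phi}$ between them is an isomorphism; its inverse is the required map $\iota$, and uniqueness is automatic since $\iota$ is prescribed on the algebra generators $c_i,\bar{c}_j$. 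The step I expect to demand the most care is this last one: checking that $h_{\ell-n+1},\dots,h_{\ell}$ is a regular sequence over $\mathbb{Z}$ and that the quotient is $\mathbb{Z}$-free with the stated Schur basis, together with pinning down the sign conventions of \cite{Hi} so that Whitney's formula produces precisely the ideal $I_{n,\ell-n}$ as written.
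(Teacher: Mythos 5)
The paper does not prove this statement: it is quoted as Borel's classical theorem with a citation to \cite{Bor}, so there is no internal proof to compare against. Your argument is the standard one and is correct: define $\Phi$ on the polynomial ring, kill $I_{n,\ell-n}$ via Whitney's formula applied to the dualized tautological sequence, get surjectivity from Giambelli together with the Schubert-class basis, and conclude by matching ranks of free $\Z$-modules (a surjection between free $\Z$-modules of equal finite rank is indeed an isomorphism). The elimination of the $\bar{x}_k$ and the identification $R/I_{n,\ell-n}\cong\Z[e_1,\dots,e_n]/(h_{\ell-n+1},\dots,h_{\ell})$, free of rank $\binom{\ell}{n}$ with Schur basis indexed by partitions in the $n\times(\ell-n)$ box, is exactly the standard presentation found in \cite{Mac} and \cite{Man}. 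The two points you rightly flag as needing care are the genuinely delicate ones: (i) the geometric input that the cell-theoretically defined special Schubert classes $c_i$, $\tilde c_j$ agree with Chern classes of the tautological bundles in the sign convention that makes $\bar c_j=(-1)^j\tilde c_j$ the coefficients of $c(Q^{\vee})$ — this is where the statement actually depends on \cite{Hi}'s conventions and cannot be derived formally; and (ii) the $\Z$-freeness of the quotient with the stated Schur basis, which you correctly outsource to the references rather than reprove. One cosmetic slip: the Schur polynomials forming the basis live in the auxiliary variables of which the $x_i$ are the elementary symmetric polynomials, not literally in $x_1,\dots,x_n$, but this does not affect the argument.
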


Henceforth we shall use the isomorphism $\iota$ to identify the cohomology algebra $\Hc^*(\mathbb{G}_{n,\ell},\Z)$ with the quotient algebra $\mathbb{Z}[x_1,\dots,x_{n},\bar{x}_1,\dots,\bar{x}_{\ell-n}]/I_{n,\ell-n}$.

In \cite[Proposition 5.3]{Lau2} Lauda proved  that $\Hc^*(\mathbb{G}_{n,\ell},\mathbb{Q})$ is isomorphic to the basic algebra of $\HH_{\ell,n}^{(0)}$ over $\Q$. Note that the Grassmannian $\mathbb{G}_{\ell-n,\ell}$ is (non-canonically) isomorphic to the Grassmannian $\mathbb{G}_{n,\ell}$ by \cite[Exercise 3.2.6]{Man}, hence $\Hc^*(\mathbb{G}_{\ell-n,\ell},\mathbb{Z})$ is isomorphic to $\Hc^*(\mathbb{G}_{n,\ell},\mathbb{Z})$ and the basic algebra of $\HH_{\ell,\ell-n}^{(0)}$ is isomorphic to the basic algebra of $\HH_{\ell,n}^{(0)}$.

Furthermore, by the graded cellular basis (cf. \cite{HuMathas:GradedCellular}, \cite[(2.7)]{HuLiang}) for the cyclotomic nilHecke algebra $\HH_{\ell,n}^{(0)}$, we know that $\HH_{\ell,n}^{(0)}$ is actually defined over $\Z$. Moreover, Lauda's isomorphism is well-defined over $\Z$ too.

\begin{dfn}\label{bijective} Let $\tau$ be the bijection between $\Theta_{\ell,n}$ and $\P_{\ell,n}$ which is defined as follows:
for any $n$-tuple $\underline{a}=(a_1,\dots,a_{n})\in\Theta_{\ell,n}$, \begin{equation}\label{taudfn}
\tau{(\underline{a})}:=(\ell+1-(a_{n}+n),\ell+1-(a_{n-1}+n-1),\cdots,\ell+1-(a_1+1))\in\P_{\ell,n} . \end{equation}
\end{dfn}

In other words, if $\mu=(\mu^{(1)},\cdots,\mu^{(\ell)})\in\P(0)$ is the $\ell$-multipartition which is associated to $(a_1+1,a_2+2,\cdots,a_n+n)\in\P_{\ell,n}$, then $\mu'=(\mu^{(\ell)},\cdots,\mu^{(1)})\in\P(0)$ is the $\ell$-multipartition which is associated to $\tau(\underline{a})$.

\begin{dfn} \label{blamDFN} For each partition $\lambda=(\lambda_1,\cdots,\lambda_n)\in\P_n$, we define elements inside $\HH_{n}^{(0)}$ as follows:
$$b_{\lambda}:=\psi_{w_{0}}y_1^{\lambda_1+n-1}y_2^{\lambda_2+n-2}\cdots y_n^{\lambda_n}\psi_{w_{0}}y_1^{n-1}y_2^{n-2}\cdots y_{n-1}.$$
For any partition $\mu$ with length $\ell(\mu)>n$, we define $b_{\mu}:=0$, in particular, $b_{(1^s)}=0$ if $s>n$.
We also define $b_{(s)}:=0$, $b_{(1^{s})}:=0$ if $s<0$. For each $\blam=(k_1,\cdots,k_n)\in\P_{\ell,n}$,
we define $b(\blam):=\pi_{\ell}\bigl(b_{\rho(\blam)}\bigr)\in\HH_{\ell,n}^{(0)}$.
\end{dfn}

Note that the above definition is a slight generalization of (\cite[Definitions 3.1]{HuLiang}) in the following sense: if $\blam=(\lam_1,\cdots,\lam_n)\in\P_{\ell,n}$, then the elements $b(\blam)$ in our notations here are the same as the elements
$b_{\blam}$ defined in \cite[Definitions 3.1]{HuLiang}.

For each $\lam=(\lam_1,\dots,\lam_n)\in\P_n$, for simplicity, we define elements inside $\HH_{n}^{(0)}$ or inside $\HH_{\ell,n}^{(0)}$ as follows:
$$y_{\lambda}:=y_1^{\lambda_1+n-1}y_2^{\lambda_2+n-2}\cdots y_n^{\lambda_n},\quad y_{\min}:=y_1^{n-1}y_2^{n-2}\cdots y_{n-1}.$$

By \cite{HuLiang} or \cite{Lau}, we know $\wf_{1,1}\HH_{\ell,n}^{(0)}$ is a
representation element of the unique isomorphic class of indecomposable projective $\HH_{\ell,n}^{(0)}$-module,
where $\wf_{1,1}=(-1)^{\tfrac{n(n-1)}{2}}\psi_{w_{0}}y_1^{n-1}y_2^{n-2}\cdots y_{n-1}$ is a primitive idempotent.
Hence $B(\HH_{\ell,n}^{(0)})=\End_{\HH_{\ell,n}^{(0)}}(\wf_{1,1}\HH_{\ell,n}^{(0)})\cong\wf_{1,1}\HH_{\ell,n}^{(0)}\wf_{1,1}$.

\begin{lem}\label{the integral basis of basic algebra}\text{(\cite[Lemma 3.2]{HuLiang})}
The elements in $\{b(\blam)\mid\blam\in\P_{\ell,n}\}$ form a $K$-basis of the basic algebra $B\bigl(\HH_{\ell,n}^{(0)}\bigr)$ of $\HH_{\ell,n}^{(0)}$.
Moreover, the basic algebra $B\bigl(\HH_{\ell,n}^{(0)}\bigr)$ of $\HH_{\ell,n}^{(0)}$ is commutative and isomorphic to the center $Z(\HH_{\ell,n}^{(0)})$
of $\HH_{\ell,n}^{(0)}$.
\end{lem}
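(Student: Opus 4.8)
The strategy is to realise $B\bigl(\HH_{\ell,n}^{(0)}\bigr)$ concretely as $e\HH_{\ell,n}^{(0)}e$ for the idempotent $e:=\pi_\ell\bigl(\epsilon\,\psi_{w_{0,n}}y_{\min}\bigr)$, where $\epsilon:=(-1)^{n(n-1)/2}$, and then to identify this corner with $Z$ via $z\mapsto ze$, under which each $b(\blam)$ corresponds to $z_{\rho(\blam)}$. The first point is that $e$ is idempotent. Since $w_{0,n}$ is the longest element of $\Sym_n$, $\ell(w_{0,n}s)<\ell(w_{0,n})$ and $\ell(sw_{0,n})<\ell(w_{0,n})$ for every simple reflection $s$, whence $\psi_{w_{0,n}}\psi_s=0=\psi_s\psi_{w_{0,n}}$; consequently $\psi_{w_{0,n}}\psi_v=0$ for every $v\in\Sym_n$ other than the identity, and $\psi_{w_{0,n}}J_n=0$, $J'_n\psi_{w_{0,n}}=0$. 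By Lemma~\ref{commutation1}, $\epsilon\,y_{\min}\psi_{w_{0,n}}\equiv 1\bmod J_n$; left-multiplying by $\psi_{w_{0,n}}$ and using $\psi_{w_{0,n}}J_n=0$ gives $\psi_{w_{0,n}}y_{\min}\psi_{w_{0,n}}=\epsilon\,\psi_{w_{0,n}}$, from which $\bigl(\epsilon\,\psi_{w_{0,n}}y_{\min}\bigr)^2=\epsilon\,\psi_{w_{0,n}}y_{\min}$ follows at once, so $e^2=e$.

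Next I would prove the identity $b(\blam)=z_{\rho(\blam)}\,e$ in $\HH_{\ell,n}^{(0)}$. Applying the anti-involution $*$ (which fixes each $y_k$ and each $\psi_r$, sends $\psi_w\mapsto\psi_{w^{-1}}$, interchanges $J_n$ and $J'_n$, and fixes $w_{0,n}$) to the congruence $S_{\rho(\blam)}\equiv z(\rho(\blam))\bmod J_n$ of Definition~\ref{zlamDef}, and recalling $S_{\rho(\blam)}=\epsilon\,y_{\rho(\blam)}\psi_{w_{0,n}}$ with $y_{\rho(\blam)}$ a monomial in the $y_k$ and $z(\rho(\blam))$ a polynomial in the $y_k$, we obtain $\epsilon\,\psi_{w_{0,n}}y_{\rho(\blam)}\equiv z(\rho(\blam))\bmod J'_n$. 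Right-multiplying by $\psi_{w_{0,n}}y_{\min}$ and using $J'_n\psi_{w_{0,n}}=0$ gives $\epsilon\,b(\blam)=\epsilon\,\psi_{w_{0,n}}y_{\rho(\blam)}\psi_{w_{0,n}}y_{\min}=z(\rho(\blam))\,\psi_{w_{0,n}}y_{\min}$ in $\HH_n^{(0)}$, i.e.\ $b(\blam)=z(\rho(\blam))\cdot\bigl(\epsilon\,\psi_{w_{0,n}}y_{\min}\bigr)$; applying $\pi_\ell$ and using centrality of $z(\rho(\blam))$ (with $\pi_\ell(z(\rho(\blam)))=z_{\rho(\blam)}$) yields $b(\blam)=z_{\rho(\blam)}e$ with $z_{\rho(\blam)}\in Z$. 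In particular $b(\blam)\in e\HH_{\ell,n}^{(0)}e$.

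The heart of the matter is the equality $e\HH_{\ell,n}^{(0)}e=Z\,e$; the inclusion $\supseteq$ is clear since $Z$ is central and $e^2=e$. For $\subseteq$, by surjectivity of $\pi_\ell$ it suffices to treat $e\,\pi_\ell\bigl(\psi_w\,y_1^{c_1}\cdots y_n^{c_n}\bigr)\,e$ for $w\in\Sym_n$ and $c_i\in\N$; writing $g:=y_{\min}\,\psi_w\,y_1^{c_1}\cdots y_n^{c_n}\in\HH_n^{(0)}$ and using $\epsilon^2=1$, this equals $\pi_\ell\bigl(\psi_{w_{0,n}}\,g\,\psi_{w_{0,n}}\,y_{\min}\bigr)$. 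Expand $g$ in the standard basis of Lemma~\ref{standardbasis} and separate the summand without $\psi$, $g=q+\sum_{v}\psi_v q_v$ (sum over $v\ne$ identity, $q,q_v\in K[y_1,\dots,y_n]$); since $\psi_{w_{0,n}}\psi_v=0$ for these $v$, only $\pi_\ell\bigl(\psi_{w_{0,n}}\,q\,\psi_{w_{0,n}}\,y_{\min}\bigr)$ survives. Finally, expand $q$ into monomials $y^\alpha$ and invoke the calculations behind the proof of Proposition~\ref{pieri1}: modulo $J_n$, $y^\alpha\psi_{w_{0,n}}$ equals $0$ when $\alpha$ has two equal entries, and otherwise equals $\pm z(\mu)$ for the partition $\mu$ obtained by subtracting $(n-1,n-2,\dots,0)$ from the decreasing rearrangement of $\alpha$. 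Combined with $\psi_{w_{0,n}}J_n=0$ this gives $\psi_{w_{0,n}}\,q\,\psi_{w_{0,n}}=z'\,\psi_{w_{0,n}}$ for some central $z'$, so $e\,\pi_\ell\bigl(\psi_w\,y_1^{c_1}\cdots y_n^{c_n}\bigr)\,e=\epsilon\,\pi_\ell(z')\,e\in Ze$. Hence $e\HH_{\ell,n}^{(0)}e=Ze$, which is in particular commutative. The map $\phi\colon Z\to e\HH_{\ell,n}^{(0)}e$, $z\mapsto ze$, is then a surjective $K$-algebra homomorphism; it is injective because $e$ is a full idempotent: $\HH_{\ell,n}^{(0)}$ has a unique simple module (by Lauda's realisation of $\HH_{\ell,n}^{(0)}$ as a matrix algebra over the local ring $\Hc^*(\mathbb{G}_{n,\ell-n})$), the nonzero idempotent $e$ acts nontrivially on it, so $\HH_{\ell,n}^{(0)}e\HH_{\ell,n}^{(0)}=\HH_{\ell,n}^{(0)}$, and then $ze=0$ with $z$ central forces $z=z\cdot 1\in\HH_{\ell,n}^{(0)}(ze)\HH_{\ell,n}^{(0)}=0$.

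It remains to assemble the pieces. Since $e$ is full, $\HH_{\ell,n}^{(0)}$ is Morita equivalent to $e\HH_{\ell,n}^{(0)}e$, which is commutative, hence basic, hence isomorphic to $B\bigl(\HH_{\ell,n}^{(0)}\bigr)$, the unique basic algebra in the Morita class. Thus $B\bigl(\HH_{\ell,n}^{(0)}\bigr)\cong e\HH_{\ell,n}^{(0)}e=Ze\cong Z$, so $B\bigl(\HH_{\ell,n}^{(0)}\bigr)$ is commutative and isomorphic to $Z$; and since $\{z_{\rho(\blam)}\mid\blam\in\P_{\ell,n}\}$ is a $K$-basis of $Z$ by Theorem~\ref{centerbasis} and $\phi(z_{\rho(\blam)})=b(\blam)$, the set $\{b(\blam)\mid\blam\in\P_{\ell,n}\}$ is a $K$-basis of $B\bigl(\HH_{\ell,n}^{(0)}\bigr)$. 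The step I expect to be the main obstacle is the containment $e\HH_{\ell,n}^{(0)}e\subseteq Ze$ (equivalently, that $e$ is a \emph{primitive} idempotent, so that $e\HH_{\ell,n}^{(0)}e$ really is the basic algebra and not merely a matrix algebra over it): this is what forces the careful normal-form bookkeeping in $\HH_n^{(0)}$ sketched above, and it can alternatively be extracted from the classical isomorphism $\HH_n^{(0)}\cong M_{n!}\bigl(K[y_1,\dots,y_n]^{\Sym_n}\bigr)$ by tracking $\epsilon\,\psi_{w_{0,n}}y_{\min}$ to the matrix unit $E_{11}$ and passing to the cyclotomic quotient, at which stage $\dim_K e\HH_{\ell,n}^{(0)}e=\binom{\ell}{n}=\dim_K Z$ makes $\phi$ an isomorphism by dimension.
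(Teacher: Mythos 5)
Your argument is essentially correct, but note that the paper offers no proof of this statement: it is imported verbatim from \cite[Lemma 3.2]{HuLiang}, so the comparison is with that source rather than with anything proved here. The mechanism you use is the same one the paper deploys when it proves the generalization $b_{\lambda}b_{\mu}=b_{\mu}b_{\lambda}$ in Lemma \ref{commutation0}, namely the interplay of $\psi_{w_{0,n}}J_n=0$, $J'_n\psi_{w_{0,n}}=0$ and the anti-involution $*$; your identities $e^2=e$ and $b(\blam)=z_{\rho(\blam)}e$, and the key computation $e\HH_{\ell,n}^{(0)}e=Ze$, are all sound, the last one correctly reducing to the sorting relations (\ref{neqCase}) and (\ref{eqCase}) extracted from the proof of Proposition \ref{pieri1}. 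Two inputs should be made explicit. First, you call $e=\pi_\ell\bigl(\epsilon\,\psi_{w_{0,n}}y_{\min}\bigr)$ a \emph{nonzero} idempotent without argument; this is exactly where the graded cellular (or monomial) basis of $\HH_{\ell,n}^{(0)}$ from \cite{HuMathas:GradedCellular}, \cite[(2.7)]{HuLiang} must be invoked, since everything downstream (fullness of $e$, injectivity of $z\mapsto ze$) collapses if $e=0$. Second, both of your routes to injectivity --- uniqueness of the simple module via Lauda's matrix-algebra realization, or the dimension count $\dim_K e\HH_{\ell,n}^{(0)}e=\binom{\ell}{n}$ --- import nontrivial structural facts about $\HH_{\ell,n}^{(0)}$ over an arbitrary field $K$, and your appeal to Theorem \ref{centerbasis} uses a result that in \cite{HuLiang} appears downstream of the present lemma. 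So as a reconstruction of the original proof your argument is mildly circular, but it is a valid derivation within the logical framework of the present paper, where Theorem \ref{centerbasis} and the nonvanishing of the cellular basis are taken as known.
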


\begin{dfn}
We define $B$ to be the free $\Z$-submodule of the basic algebra of the cyclotomic nilHecke algebra $\HH_{\ell,n}^{(0)}(\Q)$ over $\Q$ generated by $\{b(\blam)\mid\blam\in\P_{\ell,n}\}$. We call $B$ the natural $\Z$-form of $B\bigl(\HH_{\ell,n}^{(0)}\bigr)$.
\end{dfn}

The validity of Lemma \ref{the integral basis of basic algebra} over any field $K$ already implies that
$B$ is a $\Z$-algebra and $B\bigl(\HH_{\ell,n}^{(0)}(K)\bigr)\cong B\otimes_{\Z}K$ for any field $K$.

\begin{dfn} For each $1\leq i\leq n$, $1\leq j\leq \ell-n$, we define $$\begin{aligned}
&\blam_i:=\tau\bigl(\underbrace{0,\dots,0,}_{\text{$n-i$ copies}}\underbrace{1,\dots,1}_{\text{$i$ copies}}\bigr)=(\underbrace{\ell-n,\ell-n+1,\cdots,\ell-n+i-1,}\underbrace{\ell-n+i+1,\ell-n+i+2,\cdots,\ell})\in\P_{\ell,n},\\
&\bmu_j:=\tau\bigl(\underbrace{0,\dots,0,}_{\text{$n-1$ copies}}j\bigr)=(\ell+1-n-j,\underbrace{\ell-n+2,\dots,\ell-1,\ell})\in\P_{\ell,n} .
\end{aligned}
$$
\end{dfn}

By Definition \ref{injective}, $\rho(\blam_i)=(1^i)$ and $\rho(\bmu_j)=(j)$, hence
$$b(\blam_i)=\psi_{w_{0}}(y_1y_2\cdots y_{i})y_{\min}\psi_{w_{0}}y_{\min},\quad\forall\, 1\leq i\leq n, $$ and
$$b(\bmu_j)=\psi_{w_{0}}y_1^{j}y_{\min}\psi_{w_{0}}y_{\min},\quad\forall\, 1\leq j\leq \ell-n.$$

For the convenience of our arguments in the proof of Proposition \ref{mainthprop2} and Theorem \ref{mainthm2}, we set
$$b(\blam_0)=b(\bmu_0):=\psi_{w_0}y_{\min}\psi_{w_0}y_{\min}$$
and
$$b(\blam_i):=0,\quad b(\bmu_j):=0,\qquad\forall i<0\text{ or }i>n,\,\forall j<0\text{ or }j>\ell-n.$$

\begin{lem}[\cite{Mac}]\label{alternating}
Inside $\Lambda_n$, we have that
$$\begin{aligned}&\sum_{s=0}^{n}(-1)^se_sh_{m-s}=0,\quad \text{ if } m\geq n,\\
&\sum_{s=0}^{m}(-1)^se_sh_{m-s}=0,\quad \text{ if } m\leq n.\end{aligned}$$
\end{lem}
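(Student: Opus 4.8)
The plan is to deduce both identities simultaneously from the classical generating-function relation between the complete and elementary symmetric polynomials. First I would introduce the generating series $E(t):=\sum_{k\geq 0}e_k t^k$ and $H(t):=\sum_{k\geq 0}h_k t^k$, viewed as formal power series with coefficients in $\Lambda_n$, and record the product expansions $E(t)=\prod_{i=1}^n(1+x_i t)$ and $H(t)=\prod_{i=1}^n(1-x_i t)^{-1}$; both follow directly by expanding the products and comparing with the definitions of $e_k$ and $h_k$. Note in particular that $e_s=0$ for $s>n$, since there are only $n$ variables, and $h_{m-s}=0$ whenever $m-s<0$ by convention.

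The key step is the identity $E(-t)\,H(t)=\prod_{i=1}^n(1-x_i t)\cdot\prod_{i=1}^n(1-x_i t)^{-1}=1$. Extracting the coefficient of $t^m$ from both sides and using that the coefficient of $t^s$ in $E(-t)$ equals $(-1)^s e_s$, we obtain
\[
\sum_{s\geq 0}(-1)^s e_s h_{m-s}=0\qquad\text{for every }m\geq 1,
\]
where the sum is finite because only the indices with $0\leq s\leq\min(m,n)$ contribute.

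It then remains only to match this single identity with the two displayed equalities in the statement. If $m\geq n$, the terms with $s>n$ vanish, so $\sum_{s\geq 0}(-1)^s e_s h_{m-s}=\sum_{s=0}^n(-1)^s e_s h_{m-s}$, giving the first formula. If $1\leq m\leq n$, the terms with $s>m$ vanish, so the sum equals $\sum_{s=0}^m(-1)^s e_s h_{m-s}$, giving the second formula. I expect no genuine obstacle here: the only point requiring a little care is the bookkeeping of summation ranges, namely that the natural coefficient identity $\sum_s(-1)^s e_s h_{m-s}=0$ restricts to exactly the two ranges claimed according to whether $m$ is at least $n$ or at most $n$. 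A purely combinatorial inclusion--exclusion argument on monomials would also work, but the generating-function route is shorter; in any case the identity is classical and is recorded in Macdonald's book \cite{Mac}.
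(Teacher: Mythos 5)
Your proof is correct and is essentially the same argument as the paper's, which simply cites \cite[(2.6), (2.6$'$)]{Mac}; Macdonald's derivation there is exactly your generating-function identity $E(-t)H(t)=1$ followed by extracting the coefficient of $t^m$. Your bookkeeping of the two summation ranges (and the implicit restriction to $m\geq 1$) is also handled correctly.
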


\begin{proof} This follows directly from \cite[(2.6), (2.6$'$)]{Mac}.
\end{proof}

\begin{lem}\text{(\cite[Proposition 7]{HoffnungLauda:KLRnilpotency})}\label{sum1}
For any integers $1\leq s\leq n$, $t\geq 1$, inside the cyclotomic nilHecke algebra $\HH_{\ell,n}^{(0)}$, we have that
$$ \sum_{l_1+\cdots+l_s=\ell-s+t}y_1^{l_1}\cdots y_s^{l_s}=0.$$
\end{lem}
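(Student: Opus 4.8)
The plan is to recognise the sum as a complete homogeneous symmetric polynomial in $y_1,\dots,y_s$: writing $h_m(y_1,\dots,y_s)$ for $\sum_{l_1+\cdots+l_s=m}y_1^{l_1}\cdots y_s^{l_s}$, the assertion is that $h_m(y_1,\dots,y_s)=0$ in $\HH_{\ell,n}^{(0)}$ for every $m\geq\ell-s+1$ (take $m=\ell-s+t$). I would prove this by induction on $s$, where $P(s)$ denotes the statement ``$h_m(y_1,\dots,y_s)=0$ for all $m\geq\ell-s+1$''. The base case $P(1)$ is immediate: $h_m(y_1)=y_1^m=y_1^{m-\ell}y_1^{\ell}=0$ for $m\geq\ell$, since $y_1^{\ell}=0$ by definition of the cyclotomic quotient.

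For the inductive step ($s\geq 2$) assume $P(s-1)$; in particular $h_{\ell-s+2}(y_1,\dots,y_{s-1})=0$. The key ingredient is a commutation identity that already holds in $\HH_{n}^{(0)}$. Put $M:=\ell-s+2$ and expand $h_M(y_1,\dots,y_{s-1})=\sum_{a\geq 0}h_{M-a}(y_1,\dots,y_{s-2})y_{s-1}^{a}$; combining the relation–consequence $\psi_{s-1}y_{s-1}^{a}=y_s^{a}\psi_{s-1}-h_{a-1}(y_{s-1},y_s)$ with the addition formula $h_N(X\sqcup Y)=\sum_b h_{N-b}(X)h_b(Y)$ for symmetric functions, one computes
\[
\psi_{s-1}h_M(y_1,\dots,y_{s-1})=h_M(y_1,\dots,y_{s-2},y_s)\psi_{s-1}-h_{M-1}(y_1,\dots,y_s).
\]
Since the left-hand side is $0$ by $P(s-1)$ and $M-1=\ell-s+1$, this gives $h_{\ell-s+1}(y_1,\dots,y_s)=h_{\ell-s+2}(y_1,\dots,y_{s-2},y_s)\psi_{s-1}$ in $\HH_{\ell,n}^{(0)}$. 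Right-multiplying by $\psi_{s-1}$ and using $\psi_{s-1}^{2}=0$ annihilates the right-hand side, so $h_{\ell-s+1}(y_1,\dots,y_s)\psi_{s-1}=0$.

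To finish I would cancel this trailing $\psi_{s-1}$: right-multiply by $y_s$ and use $\psi_{s-1}y_s=y_{s-1}\psi_{s-1}+1$ together with the fact that $h_{\ell-s+1}(y_1,\dots,y_s)$ commutes with $y_{s-1}$, to obtain
\[
0=h_{\ell-s+1}(y_1,\dots,y_s)\psi_{s-1}y_s=y_{s-1}\bigl(h_{\ell-s+1}(y_1,\dots,y_s)\psi_{s-1}\bigr)+h_{\ell-s+1}(y_1,\dots,y_s)=h_{\ell-s+1}(y_1,\dots,y_s).
\]
This proves the boundary case $m=\ell-s+1$. The remaining cases $m>\ell-s+1$ follow by a straightforward induction on $m$ from the telescoping identity $h_m(y_1,\dots,y_s)=h_m(y_1,\dots,y_{s-1})+y_s h_{m-1}(y_1,\dots,y_s)$: the first summand vanishes by $P(s-1)$ (as $m\geq\ell-(s-1)+1$) and the second by the inductive hypothesis on $m$. (One could instead use Lemma~\ref{alternating} to propagate the vanishing of the $s$ boundary values $\ell-s+1\leq m\leq\ell$ to all larger $m$.) Thus $P(s)$ holds, completing the induction.

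The inductive step is where the real work lies. The decisive observations are: (i) the commutation identity above, a direct but slightly delicate consequence of the defining relations in Definition~\ref{nilHecke} and the symmetric-function addition formula; and (ii) the fact that right-multiplication by $\psi_{s-1}$ kills the error term $h_M(y_1,\dots,y_{s-2},y_s)\psi_{s-1}$ exactly because $\psi_{s-1}^{2}=0$, after which the relation $h_{\ell-s+1}(y_1,\dots,y_s)\psi_{s-1}=0$ can be solved for $h_{\ell-s+1}(y_1,\dots,y_s)$ itself by one more multiplication by $y_s$. Once this mechanism is isolated, verifying the commutation identity and performing the cancellations are routine manipulations of the relations of Definition~\ref{nilHecke} and standard identities among complete and elementary symmetric polynomials.
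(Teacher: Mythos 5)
Your argument is correct and complete: the identity $\psi_{s-1}y_{s-1}^{a}=y_s^{a}\psi_{s-1}-h_{a-1}(y_{s-1},y_s)$ follows by iterating $\psi_{s-1}y_{s-1}=y_s\psi_{s-1}-1$, the addition formula for complete symmetric polynomials then gives your commutation identity $\psi_{s-1}h_M(y_1,\dots,y_{s-1})=h_M(y_1,\dots,y_{s-2},y_s)\psi_{s-1}-h_{M-1}(y_1,\dots,y_s)$, and the two-step extraction (right-multiply by $\psi_{s-1}$, which kills the error term via $\psi_{s-1}^{2}=0$, then by $y_s$, which peels off the identity from $\psi_{s-1}y_s=y_{s-1}\psi_{s-1}+1$) legitimately yields $h_{\ell-s+1}(y_1,\dots,y_s)=0$; the upward induction in $m$ via $h_m(y_1,\dots,y_s)=h_m(y_1,\dots,y_{s-1})+y_sh_{m-1}(y_1,\dots,y_s)$ is also sound. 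Note, however, that your route is genuinely different from the paper's: the paper disposes of the lemma in one sentence by citing \cite[Proposition 7]{HoffnungLauda:KLRnilpotency} for the boundary case $t=1$ (which is exactly the statement $h_{\ell-s+1}(y_1,\dots,y_s)=0$ in $\HH_{\ell,n}^{(0)}$) and then invoking an induction on $t$, whereas you replace the external citation by a self-contained induction on the number of variables $s$, using only the defining relations of Definition \ref{nilHecke} and standard symmetric-function identities. What your version buys is independence from the Hoffnung--Lauda nilpotency result (and it actually supplies the details of the induction that the paper only sketches); what the paper's version buys is brevity. Either proof establishes the same vanishing, so yours would serve as a valid, more elementary replacement.
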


The following result generalizes \cite[Lemma 3.2]{HuLiang}. Though the proof is similar, we include the proof here for the convenience of
readers.

\begin{lem}\label{commutation0}
For any $\lambda,\mu\in\P_n$, we have that $b_{\lambda}b_{\mu}=b_{\mu}b_{\lam}$ holds inside $\HH_{n}^{(0)}$
and hence holds for their images inside $\HH_{\ell,n}^{(0)}$.
\end{lem}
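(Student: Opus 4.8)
The plan is to prove the commutativity $b_\lambda b_\mu = b_\mu b_\lambda$ directly inside $\HH_n^{(0)}$, from which the statement in $\HH_{\ell,n}^{(0)}$ follows by applying $\pi_\ell$. The key structural observation is that $b_\lambda = \psi_{w_{0,n}} y_\lambda \psi_{w_{0,n}} y_{\min}$, and that the ``middle'' piece $y_\lambda \psi_{w_{0,n}}$ is, up to sign, exactly $S_\lambda$ read against the factor $y_{\min}$; more precisely $S_\lambda = (-1)^{n(n-1)/2} y_\lambda \psi_{w_{0,n}}$. So I would first rewrite
\begin{equation*}
b_\lambda = (-1)^{n(n-1)/2}\,\psi_{w_{0,n}}\, S_\lambda\, y_{\min},
\end{equation*}
and then use the fact, established in Definition~\ref{zlamDef} and Theorem~\ref{maintheorem1}, that $S_\lambda \equiv z(\lambda) \bmod J_n$ with $z(\lambda) = s_\lambda(y_1,\dots,y_n)$ a central element of $\HH_n^{(0)}$. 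Writing $S_\lambda = z(\lambda) + g_\lambda$ with $g_\lambda \in J_n$, the plan is to show that the $J_n$-part contributes nothing once we multiply on the left by $\psi_{w_{0,n}}$: since $g_\lambda \in J_n = \sum_s \psi_s \HH_n^{(0)}$, each term of $\psi_{w_{0,n}} g_\lambda$ has the form $\psi_{w_{0,n}} \psi_s h$, and because $\ell(w_{0,n} s) < \ell(w_{0,n})$ is impossible to lengthen — rather, $\psi_{w_{0,n}}\psi_s$ either is $\psi_{w_{0,n}s}$ with a shorter reduced word or vanishes — one sees $\psi_{w_{0,n}}\psi_s = 0$ for every $s$ (this is the nilHecke analogue of $w_{0,n}s < w_{0,n}$, using $\psi_s^2=0$ together with a reduced expression for $w_{0,n}$ ending in $s_s$). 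Hence $\psi_{w_{0,n}} g_\lambda = 0$, so
\begin{equation*}
b_\lambda = (-1)^{n(n-1)/2}\,\psi_{w_{0,n}}\, z(\lambda)\, y_{\min}.
\end{equation*}

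From here the computation of $b_\lambda b_\mu$ is straightforward: expanding,
\begin{equation*}
b_\lambda b_\mu = \psi_{w_{0,n}} z(\lambda) y_{\min} \psi_{w_{0,n}} z(\mu) y_{\min}.
\end{equation*}
Since $z(\mu)$ is central it commutes with $y_{\min}\psi_{w_{0,n}}$ and with $z(\lambda)$; pulling $z(\mu)$ to the left past $\psi_{w_{0,n}} z(\lambda) y_{\min}$ and similarly for $z(\lambda)$, we get $b_\lambda b_\mu = z(\lambda) z(\mu) \cdot \psi_{w_{0,n}} y_{\min} \psi_{w_{0,n}} y_{\min}$, which is manifestly symmetric in $\lambda$ and $\mu$ because $z(\lambda)z(\mu) = z(\mu)z(\lambda)$ in the commutative polynomial ring $K[y_1,\dots,y_n]$. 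This yields $b_\lambda b_\mu = b_\mu b_\lambda$ in $\HH_n^{(0)}$, and applying the surjection $\pi_\ell$ gives the same identity in $\HH_{\ell,n}^{(0)}$. (One also needs the degenerate cases where $\ell(\lambda) > n$ or an index is negative, but there $b_\lambda = 0$ by definition, so commutativity is trivial.)

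The main obstacle I expect is the lemma $\psi_{w_{0,n}}\psi_s = 0$ for all $1 \le s < n$, and more generally making rigorous the claim that left-multiplication by $\psi_{w_{0,n}}$ annihilates $J_n$. The cleanest route is: fix $s$; then $w_{0,n}$ has a reduced expression of the form $w' s_s$ (equivalently $\ell(w_{0,n} s_s) = \ell(w_{0,n}) - 1$, which holds for every simple reflection since $w_{0,n}$ is the longest element), so $\psi_{w_{0,n}} = \psi_{w'}\psi_s$ and $\psi_{w_{0,n}}\psi_s = \psi_{w'}\psi_s^2 = 0$. This mirrors the argument already used repeatedly in the proofs of Lemma~\ref{commutation2}, Lemma~\ref{commutation3}, and Proposition~\ref{pieri1}, where terms $\psi_s(\cdots)$ were absorbed into $J_n$; indeed, the cited \cite[Lemma 3.2]{HuLiang} presumably establishes exactly this kind of identity, and the remark preceding the statement says the present proof merely generalizes it. So no genuinely new difficulty arises — the work is just in carefully tracking the central element $z(\lambda)$ through the products, and in invoking Theorem~\ref{maintheorem1} to identify $z(\lambda)$ with a genuine symmetric polynomial (though for commutativity one only needs that $z(\lambda)$ is central, not its explicit Schur-polynomial form).
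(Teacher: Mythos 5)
Your proof is correct, and it takes a route that differs in one essential point from the paper's. The paper first collapses the product using $\psi_{w_{0,n}}y_{\min}\psi_{w_{0,n}}=(-1)^{n(n-1)/2}\psi_{w_{0,n}}$ to get $b_\mu b_\lambda=(-1)^{n(n-1)/2}\psi_{w_{0,n}}\bigl(y_\mu\psi_{w_{0,n}}y_\lambda\bigr)\psi_{w_{0,n}}y_{\min}$, writes $y_\mu\psi_{w_{0,n}}y_\lambda\equiv h\bmod J_n$ for some polynomial $h$ in the $y$'s (not necessarily central), and then applies the anti-involution $*$ to deduce $y_\lambda\psi_{w_{0,n}}y_\mu\equiv h\bmod J'_n$; both products then equal $(-1)^{n(n-1)/2}\psi_{w_{0,n}}h\psi_{w_{0,n}}y_{\min}$ because $\psi_{w_{0,n}}J_n=0=J'_n\psi_{w_{0,n}}$. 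You instead avoid the involution entirely by invoking the centrality of $z(\lambda)$ (which the paper establishes just before Definition~\ref{zlamDef} via the argument of \cite[Lemma 3.4]{HuLiang}), obtaining the factorization $b_\lambda=(-1)^{n(n-1)/2}z(\lambda)\,\psi_{w_{0,n}}y_{\min}$; commutativity is then immediate from $z(\lambda)z(\mu)=z(\mu)z(\lambda)$. Your key auxiliary fact $\psi_{w_{0,n}}\psi_s=0$ (from a reduced expression of $w_{0,n}$ ending in $s_s$ together with $\psi_s^2=0$) is exactly the mechanism the paper uses implicitly, and your treatment of the signs and degenerate cases is fine. What your version buys is the stronger and reusable structural statement that every $b_\lambda$ is a central element times the fixed element $\psi_{w_{0,n}}y_{\min}$, which makes the commutativity of the whole span of the $b_\lambda$'s transparent; what it costs is a dependence on the (nontrivial) symmetry of $z(\lambda)$, whereas the paper's argument needs only that $h$ is a polynomial in commuting variables and that $\psi_{w_{0,n}}^*=\psi_{w_{0,n}}$. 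You are also right that Theorem~\ref{maintheorem1} is not needed, only centrality.
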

\begin{proof}
According to $(\ref{usefull formula})$, one can easily check
that $J_n\psi_{w_0}=0=\psi_{w_0}J_n^*$.
We have that
\begin{align*}
b_{\mu}b_{\lambda}&=\psi_{w_{0}}y_{\mu}\psi_{w_{0}}y_{\min}\psi_{w_{0}}y_{\lambda}\psi_{w_{0}}y_{\min}\\
&=(-1)^{\tfrac{n(n-1)}{2}}\psi_{w_{0}}(y_{\mu}\psi_{w_{0}}y_{\lambda})\psi_{w_{0}}y_{\min}
\end{align*}
and
\begin{align*}
b_{\lambda}b_{\mu}&=\psi_{w_{0}}y_{\lambda}\psi_{w_{0}}y_{\min}\psi_{w_{0}}y_{\mu}\psi_{w_{0}}y_{\min}\\
&=(-1)^{\tfrac{n(n-1)}{2}}\psi_{w_{0}}(y_{\lambda}\psi_{w_{0}}y_{\mu})\psi_{w_{0}}y_{\min}
\end{align*}
We can write
$$y_{\mu}\psi_{w_{0}}y_{\lambda}\equiv h\mod J_n$$ where $h\in K[y_1,\dots,y_n]$. Applying the anti-involution $*$
to the above equivalence, we get that $$y_{\lambda}\psi_{w_{0}}y_{\mu}\equiv h\mod J^*_n.$$
Therefore $$b_{\lambda}b_{\mu}=(-1)^{\tfrac{n(n-1)}{2}}\psi_{w_{0}}h\psi_{w_{0}}y_{\min}=b_{\mu}b_{\lambda}.$$
\end{proof}

\begin{lem}\label{homomorphism} Let $B$ be the natural $\Z$-form of the $\Z$-graded basic algebra of $\HH_{\ell,n}^{(0)}$. The map $\eta$ which sends $c_i=x_i+I_{n,\ell-n}$ to $b(\blam_i)$ for each $1\leq i\leq n$, and $\bar{c}_j=\bar{x}_j+I_{n,\ell-n}$ to $(-1)^jb(\bmu_j)$ for each $1\leq j\leq \ell-n$, extends uniquely to a well-defined $\Z$-algebra homomorphism $\eta: \Hc^*(\mathbb{G}_{n,\ell},\Z)\rightarrow B$.
\end{lem}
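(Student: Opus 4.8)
The plan is to realize $\eta$ as a map induced on Borel's presentation of $\Hc^*(\mathbb{G}_{n,\ell-n},\Z)$. Since the basic algebra of $\HH_{\ell,n}^{(0)}$, hence its natural $\Z$-form $B$, is commutative by \cite[Lemma 3.2]{HuLiang} (cf. Lemma \ref{commutation0}), the universal property of the polynomial ring gives a unique $\Z$-algebra homomorphism
$$\widetilde\eta\colon\Z[x_1,\dots,x_n,\bar{x}_1,\dots,\bar{x}_{\ell-n}]\longrightarrow B,\qquad x_i\mapsto b_{(1^i)},\quad \bar{x}_j\mapsto(-1)^j b_{(j)}.$$
By Theorem \ref{borel} it then suffices to check that $\widetilde\eta$ kills the ideal $I_{n,\ell-n}$, i.e. that $\widetilde\eta$ sends each coefficient of $t^m$ ($m\ge 1$) in $(1+x_1t+\cdots+x_nt^n)(1+\bar{x}_1t+\cdots+\bar{x}_{\ell-n}t^{\ell-n})-1$ to zero. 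The induced map $\eta\colon\Hc^*(\mathbb{G}_{n,\ell-n},\Z)\to B$ then automatically takes the stated values on $c_i$ and $\bar{c}_j$, and it is the unique such $\Z$-algebra homomorphism because $\Hc^*(\mathbb{G}_{n,\ell-n},\Z)$ is generated as a $\Z$-algebra by the $c_i$ and $\bar{c}_j$.

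The computational heart of the proof is the pair of identities in $\HH_{\ell,n}^{(0)}$
$$b_{(1^i)}=e_i(y_1,\dots,y_n)\,b_\emptyset,\qquad b_{(j)}=h_j(y_1,\dots,y_n)\,b_\emptyset,$$
where $b_\emptyset:=b_{(0)}=b_{(1^0)}$ is the identity $1_B$ of $B$. First I would observe that $\psi_{w_{0,n}}J_n=0$: any element of $J_n$ has the form $\sum_a\psi_a g_a$, and since $\ell(w_{0,n}s_a)=\ell(w_{0,n})-1$ we can write $w_{0,n}=us_a$ with $\ell(u)=\ell(w_{0,n})-1$, so $\psi_{w_{0,n}}=\psi_u\psi_a$ and $\psi_{w_{0,n}}\psi_a=\psi_u\psi_a^2=0$. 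Plugging this into Lemma \ref{commutation1} gives $\psi_{w_{0,n}}y_{\min}\psi_{w_{0,n}}=(-1)^{\tfrac{n(n-1)}{2}}\psi_{w_{0,n}}$, hence $b_\emptyset=(-1)^{\tfrac{n(n-1)}{2}}\psi_{w_{0,n}}y_{\min}$ and $b_\emptyset^2=b_\emptyset$; as $B$ is graded with one-dimensional degree-zero part spanned by $b_\emptyset$, this forces $1_B=b_\emptyset$ (this is already contained in \cite{HuLiang}). Next, by Lemma \ref{commutation3} we may write $(y_1\cdots y_i)\,y_{\min}\,\psi_{w_{0,n}}=(-1)^{\tfrac{n(n-1)}{2}}\bigl(e_i(y_1,\dots,y_n)+g\bigr)$ with $g\in J_n$; substituting this into $b_{(1^i)}=\psi_{w_{0,n}}(y_1\cdots y_i)y_{\min}\psi_{w_{0,n}}y_{\min}$ and using $\psi_{w_{0,n}}g=0$ together with the centrality of $e_i(y_1,\dots,y_n)$ (Lemma \ref{centercommurel1}) yields the first identity; the second follows in exactly the same way from Lemma \ref{commutation2}. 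These are consistent with the conventions $b_{(1^i)}=0$ for $i>n$ (where $e_i=0$) and $b_{(j)}=0$ for $j>\ell-n$ (where $h_j(y_1,\dots,y_n)=0$ in $\HH_{\ell,n}^{(0)}$ by the $s=n$ case of Lemma \ref{sum1}).

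Granting these identities and using that $b_\emptyset$ is idempotent and commutes with the central elements $h_k(y_1,\dots,y_n)$, the coefficient of $t^m$ ($m\ge1$) maps under $\widetilde\eta$ to $\sum_{i=0}^{m}(-1)^{m-i}b_{(1^i)}b_{(m-i)}$, where we set $b_{(1^0)}=b_{(0)}=1_B$ and extend freely to the whole range $0\le i\le m$ thanks to the conventions $b_{(1^i)}=0$ for $i>n$ and $b_{(j)}=0$ for $j>\ell-n$. By the identities above this equals
$$\Bigl(\sum_{i=0}^{m}(-1)^{m-i}e_i(y_1,\dots,y_n)\,h_{m-i}(y_1,\dots,y_n)\Bigr)b_\emptyset,$$
and the bracketed factor is the evaluation at $x_k:=y_k$ of $(-1)^m\sum_{i=0}^m(-1)^i e_i h_{m-i}\in\Lambda_n$, which is $0$ for every $m\ge1$ by Lemma \ref{alternating} (its two cases $m\le n$ and $m\ge n$ together exhaust all $m$, using $e_i=0$ for $i>n$). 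Hence $\widetilde\eta$ annihilates every generator of $I_{n,\ell-n}$ and descends to the required $\Z$-algebra homomorphism $\eta$.

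I expect the only genuinely delicate point to be the combinatorial bookkeeping: identifying the unit $1_B=b_\emptyset$, and matching the index ranges in Borel's presentation with the conventions $b_{(1^i)}=0=b_{(j)}$ for $i>n$ and $j>\ell-n$. Once this is in place, the vanishing of the relations is a formal consequence of Lemmas \ref{commutation1}, \ref{commutation2}, \ref{commutation3}, \ref{alternating} and \ref{sum1}.
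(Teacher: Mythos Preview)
Your argument is correct and uses the same ingredients as the paper (Lemmas \ref{commutation1}, \ref{commutation2}, \ref{commutation3}, \ref{alternating}, \ref{sum1}, and commutativity of $B$), but it is organised more efficiently. The paper first splits into the cases $\ell-n\ge n$ and $\ell-n<n$ and then, within each case, verifies three separate families of relations (one for each range of $m$), each time expanding the products $b(\blam_s)b(\bmu_{m-s})$ and simplifying with Lemmas \ref{commutation2}, \ref{commutation3} until Lemma \ref{alternating} (together with Lemma \ref{sum1} for the top range) applies. You instead extract once and for all the identities $b_{(1^i)}=e_i(y_1,\dots,y_n)\,b_\emptyset$ and $b_{(j)}=h_j(y_1,\dots,y_n)\,b_\emptyset$ (using $\psi_{w_{0,n}}J_n=0$), and then the vanishing of every generator of $I_{n,\ell-n}$ reduces uniformly to the single identity $\sum_{i}(-1)^i e_i h_{m-i}=0$ in $\Lambda_n$, with the boundary terms absorbed by $e_i=0$ for $i>n$ and $h_j(y_1,\dots,y_n)=0$ in $\HH_{\ell,n}^{(0)}$ for $j>\ell-n$. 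This avoids all the case distinctions in the paper's proof at no extra cost; conversely, the paper's case-by-case verification makes the role of the cyclotomic relation (via Lemma \ref{sum1}) in the top range of $m$ more visible.
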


\begin{proof} By Theorem \ref{borel}, $\Hc^*(\mathbb{G}_{n,\ell},\Z)\cong \mathbb{Z}[x_1,\dots,x_{n},\bar{x}_1,\dots,\bar{x}_{\ell-n}]/I_{n,\ell-n}$. By Lemma \ref{commutation0}, to prove the lemma, it suffices to verify the generating relations given by $I_{n,\ell-n}$ for $b(\blam_i)$ and $b(\bmu_j)$.

Suppose the $\ell-n\geq n$, in this case, we should check
the following three types of relations,
\begin{align}
&b(\blam_m)+\sum_{s=1}^{m-1}(-1)^{m-s}b(\blam_s)b(\bmu_{m-s})+(-1)^{m}b(\bmu_{m})=0,\quad 1\leq m\leq n,\label{relation1}\\
&(-1)^mb(\bmu_m)+\sum_{s=1}^{n}(-1)^{m-s}b(\blam_s)b(\bmu_{m-s})=0,\quad n<m\leq\ell-n,\label{relation2}\\
&\sum_{\substack{i+j=m\\ 1\leq i\leq n\\ 1\leq j\leq \ell-n}}(-1)^jb(\blam_i)b(\bmu_j)=0,\quad \ell-n<m\leq\ell .\label{relation3}
\end{align}

Let $1\leq m\leq n$. We first verify the relation (\ref{relation1}). By Corollary \ref{the equivalence between zlam and Schur polynomial}, we have that
\begin{align*}
&\quad\,b(\blam_m)+\sum_{s=1}^{m-1}(-1)^{m-s}b(\blam_s)b(\bmu_{m-s})+(-1)^{m}b(\bmu_{m})\\
&=\psi_{w_{0}}(y_1\cdots y_m)y_{\min}\psi_{w_{0}}y_{\min}
+\sum_{s=1}^{m-1}(-1)^{m-s}\psi_{w_{0}}(y_1\cdots y_{s})y_{\min}\psi_{w_{0}}y_{\min}\psi_{w_{0}}y_1^{m-s}y_{\min}\psi_{w_{0}}y_{\min}\\
&\qquad +(-1)^{m}\psi_{w_{0}}y_1^my_{\min}\psi_{w_{0}}y_{\min}\\
&=(-1)^{\tfrac{n(n-1)}{2}}\Bigl(\psi_{w_{0}}e_m(y_1,\cdots,y_n)y_{\min}+\psi_{w_{0}}\sum_{s=1}^{m-1}(-1)^{m-s}e_s(y_1,\cdots,y_n)h_{m-s}(y_1,\cdots,y_n)y_{\min}\\
&\qquad +(-1)^{m}\psi_{w_{0}}h_m(y_1,\cdots,y_n)y_{\min}\Bigr)\\
&=(-1)^{\tfrac{n(n-1)}{2}}\psi_{w_{0}}\Bigl(\sum_{s=0}^{m}(-1)^{m-s}e_s(y_1,\cdots,y_n)h_{m-s}(y_1,\cdots,y_n)\Bigr)y_{\min}\\
&=0.
\end{align*}

Let $n<m\leq \ell-n$. We next verify the relation (\ref{relation2}). By Corollary \ref{the equivalence between zlam and Schur polynomial}, we have that
\begin{align*}
&\quad\,(-1)^mb(\bmu_m)+\sum_{s=1}^{n}(-1)^{m-s}b(\blam_s)b(\bmu_{m-s})\\
&=(-1)^m\psi_{w_{0}}y_1^{m}y_{\min}\psi_{w_{0}}y_{\min}
+\sum_{s=1}^{n}(-1)^{m-s}\psi_{w_{0}}(y_1\cdots y_{s})y_{\min}\psi_{w_{0}}y_{\min}\psi_{w_{0}}y_1^{m-s}y_{\min}\psi_{w_{0}}y_{\min}\\
&=(-1)^{\tfrac{n(n-1)}{2}}\Bigl((-1)^m\psi_{w_{0}}h_m(y_1,\cdots,y_n)y_{\min}+\psi_{w_{0}}\sum_{s=1}^{n}(-1)^{m-s}e_s(y_1,\cdots,y_n)h_{m-s}(y_1,\cdots,y_n)y_{\min}\Bigr)\\
&=(-1)^{\tfrac{n(n-1)}{2}}\psi_{w_{0}}\Bigl(\sum_{s=0}^{n}(-1)^{m-s}e_s(y_1,\cdots,y_n)h_{m-s}(y_1,\cdots,y_n)\Bigr)y_{\min}\\
&=0.
\end{align*}

Let $\ell-n<m\leq\ell$. We now verify the relation (\ref{relation3}). First note that for any $t\geq 1$, by Lemma \ref{sum1}, inside $\HH_{\ell,n}^{(0)}$ we have that
\begin{equation}\label{vanishiing} h_{\ell-n+t}(y_1,\cdots,y_n)=\sum_{l_1+\cdots+l_n=\ell-n+t}y_1^{l_1}\cdots y_n^{l_n}=0.\end{equation}
Therefore, using Corollary \ref{the equivalence between zlam and Schur polynomial} again, we have that
\begin{align*}
&\quad\,\sum_{\substack{i+j=m\\ 1\leq i\leq n,1\leq j\leq \ell-n}}(-1)^jb(\bmu_j)b(\blam_i)\\
&=\sum_{\substack{i+j=m\\ 1\leq i\leq n,1\leq j\leq \ell-n}}(-1)^{j}\psi_{w_{0}}y_1^jy_{\min}\psi_{w_{0}}y_{\min}\psi_{w_{0}}(y_1\cdots y_i)y_{\min}\psi_{w_{0}}y_{\min}\\
&=(-1)^{\tfrac{n(n-1)}{2}}\psi_{w_{0}}\sum_{\substack{i+j=m\\ 1\leq i\leq n,1\leq j\leq \ell-n}}(-1)^{j}h_j(y_1,\cdots,y_n)e_{i}(y_1,\cdots,y_n)y_{\min}\\
&=(-1)^{\tfrac{n(n-1)}{2}}\psi_{w_{0}}\Bigl(\sum_{s=\ell-n+1}^{m}(-1)^{s}e_{m-s}(y_1,\cdots,y_n)h_s(y_1,\cdots,y_n)\\
&\qquad\,\,+\sum_{\substack{i+j=m\\ 1\leq i\leq n,1\leq j\leq \ell-n}}(-1)^{j}e_i(y_1,\cdots,y_n)h_{j}(y_1,\cdots,y_n)\Bigr)y_{\min}\\
&=(-1)^{\tfrac{n(n-1)}{2}}\psi_{w_{0}}\Bigl(\sum_{s=0}^{n}(-1)^{m-s}e_{s}(y_1,\cdots,y_n)h_{m-s}(y_1,\cdots,y_n)\Bigr)y_{\min}\\
&=0,
\end{align*}
where we have used (\ref{vanishiing}) in the third equality and used Lemma \ref{alternating} in the last equality.

Suppose that $\ell-n<n$, we should check the following three types of relations,
\begin{align}
&b(\blam_m)+\sum_{s=1}^{m-1}(-1)^{m-s}b(\blam_s)b(\bmu_{m-s})+(-1)^{m}b(\bmu_{m})=0,\quad 1\leq m\leq \ell-n,\label{relation11}\\
&b(\blam_m)+\sum_{s=1}^{\ell-n}(-1)^{s}b(\bmu_{s})b(\blam_{m-s})=0,\quad \ell-n<m\leq n,\label{relation22}\\
&\sum_{\substack{i+j=m\\ 1\leq i\leq n\\ 1\leq j\leq \ell-n}}(-1)^jb(\bmu_j)b(\blam_i)=0,\quad n<m\leq\ell .\label{relation33}
\end{align}
The proof is similar and is left to the readers.
\end{proof}

\begin{dfn}(\cite{Man})\label{tensorpartition}
For any partition $\lambda$ and positive integer $k$, we define $\lambda\otimes k$ (respectively, $\lambda\otimes 1^k$) to be the set of all partitions obtained by adding $k$ boxes to $\lambda$ at most one per column (respectively, at most one per row).
\end{dfn}

The following is the Pieri's formulas for the elements $\{b_{\lambda}\mid\lam\in\P_n\}$.

\begin{prop}\label{pieri2}
For any partition $\lambda=(\lambda_1,\cdots,\lambda_n)\in\P_n$ and integers $0\leq k\leq n, s\geq0$, we have that
$$b_{\lambda}b_{(1^{k})}=\sum_{\mu\in\lambda\otimes1^k}b_{\mu},\qquad b_{\lambda}b_{(s)}=\sum_{\mu\in\lambda\otimes s}b_{\mu}.$$
\end{prop}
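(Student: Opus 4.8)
The plan is to deduce Proposition~\ref{pieri2} directly from the already-established Pieri formula for the elements $S_\lambda$ (Proposition~\ref{pieri1}) by conjugating through the surjection $\pi_\ell$ and the structure $b_\lambda = \psi_{w_{0,n}} y_\lambda \psi_{w_{0,n}} y_{\min}$. The starting observation is that, by Definition~\ref{blamDFN} and the Definition of $S_\lambda$,
\begin{align*}
b_\lambda b_{(1^k)} &= \psi_{w_{0,n}} y_\lambda \psi_{w_{0,n}} y_{\min}\,\psi_{w_{0,n}}(y_1\cdots y_k)\, y_{\min}\,\psi_{w_{0,n}}\, y_{\min}\\
&= (-1)^{\tfrac{n(n-1)}{2}}\,\psi_{w_{0,n}}\bigl(y_\lambda\,\psi_{w_{0,n}}\,(y_1\cdots y_k)\,y_{\min}\,\psi_{w_{0,n}}\bigr)\,y_{\min},
\end{align*}
so that, up to the fixed factor $(-1)^{n(n-1)/2}\psi_{w_{0,n}}(-)\,y_{\min}$, the product $b_\lambda b_{(1^k)}$ is governed by $y_\lambda\,\psi_{w_{0,n}}(y_1\cdots y_k) y_{\min}\psi_{w_{0,n}}$, which is exactly $\pm S_\lambda\cdot S_{(1^k)}$ rewritten (note $S_\lambda = (-1)^{n(n-1)/2} y_\lambda \psi_{w_{0,n}}$ and $S_{(1^k)} = (-1)^{n(n-1)/2}(y_1\cdots y_k)y_{\min}\psi_{w_{0,n}}$).

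The key step is then the following intertwining claim: for any $g\in\HH_n^{(0)}$, if $g\equiv y_g \bmod J_n$ with $y_g\in K[y_1,\dots,y_n]$, then
\[
\psi_{w_{0,n}}\,g\,y_{\min} \;=\; (-1)^{\tfrac{n(n-1)}{2}}\,\psi_{w_{0,n}}\,y_g\,y_{\min}
\]
inside $\HH_n^{(0)}$. This is because $J_n = \sum_s \psi_s\HH_n^{(0)}$, and (as already used repeatedly, e.g.\ in the proof of Lemma~\ref{commutation0} and in Lemma~\ref{commutation1}) multiplying a term $\psi_s h$ on the left by $\psi_{w_{0,n}}$ kills it: writing $w_{0,n} = w' s$ with $\ell(w_{0,n}) = \ell(w')+1$ forces $\psi_{w_{0,n}}\psi_s h$ to contain $\psi_s^2 = 0$ after one braid move, hence vanishes. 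Combined with Proposition~\ref{pieri1}, which gives $S_\lambda S_{(1^k)} \equiv \sum_{\mu\in\lambda\otimes 1^k} S_\mu \bmod J_n$ and likewise for $S_{(s)}$, applying $\psi_{w_{0,n}}(-)y_{\min}$ to both sides and using the intertwining claim turns the congruence modulo $J_n$ into an honest equality of $b$-elements: $b_\lambda b_{(1^k)} = \sum_{\mu\in\lambda\otimes 1^k} b_\mu$ and $b_\lambda b_{(s)} = \sum_{\mu\in\lambda\otimes s} b_\mu$ in $\HH_n^{(0)}$, and hence, after applying $\pi_\ell$, also in $\HH_{\ell,n}^{(0)}$ (with the convention that $b_\mu = 0$ if $\ell(\mu) > n$ matching $S_\mu = 0$, and the $k=0$, $s=0$ cases trivial).

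There is one bookkeeping point to handle carefully: the $S$-version of Pieri (Proposition~\ref{pieri1}) is a congruence modulo $J_n$ between elements of the form $(\text{polynomial})\cdot\psi_{w_{0,n}}$, whereas I need to left-multiply by $\psi_{w_{0,n}}$ and right-multiply by $y_{\min}$, so I must check that the relevant products $y_\lambda\psi_{w_{0,n}}(y_1\cdots y_k)y_{\min}\psi_{w_{0,n}}$ really do reduce (mod $J_n$) to $\sum_\mu y_\mu\psi_{w_{0,n}}$; this is precisely the content of Proposition~\ref{pieri1} once one commutes $y_\lambda$ past $\psi_{w_{0,n}}$ using the relations — or, more cleanly, one simply invokes Proposition~\ref{pieri1} verbatim since $S_\lambda S_{(1^k)} = (-1)^{n(n-1)/2} y_\lambda\psi_{w_{0,n}}(y_1\cdots y_k)y_{\min}\psi_{w_{0,n}}$ by definition. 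I expect the main (though modest) obstacle to be exactly this packaging: making sure the signs $(-1)^{n(n-1)/2}$ and the placement of the trailing $y_{\min}$ are tracked consistently between $S_\mu$ and $b_\mu$, and confirming that the "extra" structure $\psi_{w_{0,n}}(-)y_{\min}$ both annihilates $J_n$ on the left and reproduces the definition of $b_\mu$ on each surviving summand $S_\mu$. Once that is set up, the two formulas follow simultaneously from the two formulas in Proposition~\ref{pieri1}, with no further computation needed.
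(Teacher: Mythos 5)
Your proposal is correct and follows essentially the same route as the paper: collapse the inner $\psi_{w_{0,n}}y_{\min}\psi_{w_{0,n}}$ to $(-1)^{n(n-1)/2}\psi_{w_{0,n}}$, recognize the remaining middle block as $S_\lambda S_{(1^k)}$ (resp.\ $S_\lambda S_{(s)}$), and use that left multiplication by $\psi_{w_{0,n}}$ annihilates $J_n$ to upgrade the congruence of Proposition~\ref{pieri1} to an honest equality of $b$-elements. The only quibble is that your intertwining claim should read $\psi_{w_{0,n}}\,g\,y_{\min}=\psi_{w_{0,n}}\,y_g\,y_{\min}$ with no extra sign; the factor $(-1)^{n(n-1)/2}$ instead appears when each $S_\mu=(-1)^{n(n-1)/2}y_\mu\psi_{w_{0,n}}$ is converted back into $b_\mu$, where it cancels the sign produced in the first step.
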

\begin{proof}
By Corollary \ref{the equivalence between zlam and Schur polynomial},
\begin{align*}
b_{\lambda}b_{(1^k)}&=\psi_{w_{0}}y_1^{\lambda_1+n-1}y_2^{\lambda_2+n-2}\cdots y_n^{\lambda_n}\psi_{w_{0}}y_{\min}\psi_{w_{0}}(y_1\cdots y_k)y_1^{n-1}y_2^{n-2}\cdots y_n\psi_{w_{0}}y_{\min}\\
&=(-1)^{\tfrac{n(n-1)}{2}}\psi_{w_{0}}y_1^{\lambda_1+n-1}y_2^{\lambda_2+n-2}\cdots y_n^{\lambda_n}\psi_{w_{0}}(y_1\cdots y_k)y_1^{n-1}y_2^{n-2}\cdots y_n\psi_{w_{0}}y_{\min}\\
&=\psi_{w_{0}}y_1^{\lambda_1+n-1}y_2^{\lambda_2+n-2}\cdots y_n^{\lambda_n}\sum_{1\leq t_1<\cdots<t_k\leq n}y_{t_1}\cdots y_{t_k}\psi_{w_{0}}y_{\min}.
\end{align*}
We just need to show the above last term is equal to $\sum_{\mu\in\lambda\otimes1^k}b_{\mu}.$

For each $k$-tuple $(t_1,\cdots,t_k)$ with $1\leq t_1<\cdots<t_k\leq n$, let $(a_1,\cdots,a_n)$ be the $n$-tuple which is determined by $$
a_i:=\begin{cases}\lam_i+1, &\text{if $i=t_s$ for some $1\leq s\leq k$;}\\
\lam_i, &\text{otherwise.}
\end{cases}
$$

If $(a_1,\cdots,a_n)$ is not a partition, then there must exist some $j$ with $1\leq j\leq k$ such that $$\lambda_{t_{j}-1}=\lambda_{t_j}\ \text{and}\ a_{t_j-1}=\lambda_{t_{j}-1},\ a_{t_j}=\lambda_{t_j}+1.$$
In this case, $a_{t_j-1}+1=a_{t_j}$. Let $w_j\in\Sym_n$ such that $w_{0}=s_{t_j-1}w_j$ and $\ell(w_{0})=\ell(w_j)+1$. Thus
$$\begin{aligned}
&\quad\,\psi_{w_0}y_1^{\lambda_1+n-1}y_2^{\lam_2+n-2}\cdots y_n^{\lam_n}(y_{t_1}\cdots y_{t_k})\psi_{w_{0}}y_{\min}\\
&=\psi_{w_0}y_1^{a_1+n-1}\cdots (y_{t_j-1}^{a_{t_j-1}+n-(t_j-1)}y_{t_j}^{a_{t_j}+n-t_j})\cdots y_n^{a_n}\psi_{w_{0}}y_{\min}\\
&=\psi_{w_0}y_1^{a_1+n-1}\cdots (y_{t_j-1}^{a_{t_j-1}+n-(t_j-1)}y_{t_j}^{a_{t_j}+n-t_j})\cdots y_n^{a_n}\psi_{t_j-1}\psi_{w_j}y_{\min}\\
&=\psi_{w_0}y_1^{a_1+n-1}\cdots (y_{t_j-1}^{a_{t_j-1}+n-(t_j-1)}y_{t_j}^{a_{t_j}+n-t_j}\psi_{t_j-1})\cdots y_n^{a_n}\psi_{w_j}y_{\min}\\
&=\psi_{w_0}y_1^{a_1+n-1}\cdots (\psi_{t_j-1}y_{t_j-1}^{a_{t_j-1}+n-(t_j-1)}y_{t_j}^{a_{t_j}+n-t_j})\cdots y_n^{a_n}\psi_{w_j}y_{\min}\\
&=\psi_{w_0}\psi_{t_j-1}y_1^{a_1+n-1}\cdots (y_{t_j-1}^{a_{t_j-1}+n-(t_j-1)}y_{t_j}^{a_{t_j}+n-t_j})\cdots y_n^{a_n}\psi_{w_j}y_{\min}\\
&=0 ,
\end{aligned}
$$ as required. This proves the first formula.

For the second formula, by Corollary \ref{the equivalence between zlam and Schur polynomial}, we have that
\begin{align*}
b_{\lambda}b_{(s)}&=\psi_{w_{0}}y_1^{\lambda_1+n-1}y_2^{\lambda_2+n-2}\cdots y_n^{\lambda_n}\psi_{w_{0}}y_{\min}\psi_{w_{0}}y_1^sy_1^{n-1}y_2^{n-2}\cdots y_n\psi_{w_{0}}y_{\min}\\
&=\psi_{w_{0}}y_1^{\lambda_1+n-1}y_2^{\lambda_2+n-2}\cdots y_n^{\lambda_n}\sum_{l_1+\cdots+l_n=s}y_1^{l_1}\cdots y_n^{l_n}\psi_{w_{0}}y_{\min}\\
\end{align*}
Hence to prove the second formula, we just need to show the above last term
is equal to $\sum_{\mu\in\lambda\otimes s}b_{\mu}$.

Let $\underline{c}=(c_1,\cdots,c_n)$ be any given $n$-tuple. Suppose that $c_j\neq c_{j+1}$ for some $1\leq j<n$. Let $u_j\in\Sym_n$ such that
$w_{0}=s_{j}u_j$ and $\ell(w_{0})=\ell(u_j)+1$.  Then in this case $$
\begin{aligned}
&\quad\,\psi_{w_0}y_1^{c_1}\cdots (y_j^{c_j}y_{j+1}^{c_{j+1}}+y_j^{c_{j+1}}y_{j+1}^{c_j})\cdots y_n^{c_n}\psi_{w_{0}}y_{\min}\\
&=\psi_{w_0}y_1^{c_1}\cdots (y_j^{c_j}y_{j+1}^{c_{j+1}}+y_j^{c_{j+1}}y_{j+1}^{c_j})\psi_j\cdots y_n^{c_n}\psi_j\psi_{u_j}y_{\min}\\
&=\psi_{w_0}y_1^{c_1}\cdots \psi_j(y_j^{c_j}y_{j+1}^{c_{j+1}}+y_j^{c_{j+1}}y_{j+1}^{c_j})\cdots y_n^{c_n}\psi_{u_j}y_{\min}\\
&=\psi_{w_0}\psi_jy_1^{c_1}\cdots (y_j^{c_j}y_{j+1}^{c_{j+1}}+y_j^{c_{j+1}}y_{j+1}^{c_j})\cdots y_n^{c_n}\psi_{u_j}y_{\min}\\
&=0.
\end{aligned} $$
In other words, in this case,
\begin{equation}\label{neqCase}
\psi_{w_0}y_1^{c_1}\cdots y_j^{c_j}y_{j+1}^{c_{j+1}}\cdots y_n^{c_n}\psi_{w_{0}}y_{\min}
=-\psi_{w_0}y_1^{c_1}\cdots y_j^{c_{j+1}}y_{j+1}^{c_{j}}\cdots y_n^{c_n}\psi_{w_{0}}y_{\min}.
\end{equation}

If $c_j=c_{j+1}$  for some $1\leq j<n$, then a similar calculation shows that \begin{equation}\label{eqCase}
\psi_{w_0}y_1^{c_1}\cdots y_j^{c_j}y_{j+1}^{c_{j+1}}\cdots y_n^{c_n}\psi_{w_{0}}y_{\min}=0.
\end{equation}

Now let $\alpha=(\alpha_1,\cdots,\alpha_n)\in\N^n$ with $\alpha_1+\cdots+\alpha_n=s$. If $\lambda+\alpha:=(\lam_1+\alpha_1,\cdots,\lam_n+\alpha_n)$ doesn't belong to $\lambda\otimes s$, then by the definition of $\lambda\otimes s$, there must exists some $1\leq j< n$
such that $\lambda_{j+1}+\alpha_{j+1}>\lambda_j$. In this case, we define an $n$-tuple $\beta=(\beta_1,\cdots,\beta_n)\in\N^n$ by
$$
\beta_{j}=\alpha_{j+1}-(\lambda_j-\lambda_{j+1}+1),\quad\beta_{j+1}=\alpha_{j}+(\lambda_j-\lambda_{j+1}+1),\quad\beta_i=\alpha_i,\quad \forall\,i\neq j, j+1.
$$

Set $\delta=(n-1,n-2,\cdots,0)$. Note that $\beta_1+\cdots+\beta_n=s$ and $s_j(\lambda+\alpha+\delta)=\lambda+\beta+\delta$. If $\lambda+\alpha+\delta=\lambda+\beta+\delta$, then (\ref{eqCase})
implies that $\psi_{w_0}y_1^{\lambda_1+\alpha_1+n-1}\cdots y_{n}^{\lambda_n+\alpha_n}\psi_{w_{0}}y_{\min}=0$;
if $\lambda+\alpha+\delta\neq \lambda+\beta+\delta$, then (\ref{neqCase}) implies that $$
\psi_{w_0}y_1^{\lambda_1+\alpha_1+n-1}\cdots y_{n}^{\lambda_n+\alpha_n}\psi_{w_{0}}y_{\min}
+\psi_{w_0}y_1^{\lambda_1+\beta_1+n-1}\cdots y_{n}^{\lambda_n+\beta_n}\psi_{w_{0}}y_{\min}=0.
$$
Therefore, in any case, we can ignore these terms when calculating
$$\psi_{w_{0}}y_1^{\lambda_1+n-1}y_2^{\lambda_2+n-2}\cdots y_n^{\lambda_n}\sum_{l_1+\cdots+l_n=s}y_1^{l_1}\cdots y_n^{l_n}\psi_{w_{0}}y_{\min}.$$
Thus it suffices to consider only those $(l_1,\cdots,l_n)\in\N^n$ with $l_1+\cdots+l_n=s$ and $\lambda+(l_1,\cdots,l_n)\in\lambda\otimes s$. This complete the proof of the second formula.
\end{proof}

The following is the Jacobi-Trudi formula for the elements $\{b_{\lambda}\mid\lam\in\P_n\}$.

\begin{lem}\label{Jacobi3}
Let $\lambda=(\lambda_1,\cdots,\lambda_n)\in\P_n$. Let $\lambda^{\prime}:=(\lambda^{\prime}_1,\dots,\lambda^{\prime}_m)$ be the conjugate of $\lam$, where $m\geq\ell(\lambda^{\prime})$. Then we have that,
$$b_{\lambda}= \det(b_{(\lambda_i-i+j)})_{1\leq i,j\leq n},\ \ \ \ b_{\lambda}=\det(b_{(1^{\lambda^{\prime}_i-i+j})})_{1\leq i,j\leq m}.$$
where, by definition, $b_{(s)}=0,b_{(1^s)}=0$ if $s<0$ and $b_{(1^k)}=0$ if $k>n$.
\end{lem}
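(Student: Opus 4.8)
The plan is to mimic exactly the proof of Proposition~\ref{Jacobi2}, replacing the role of the ``$y$ polynomial part modulo $J_n$'' by the elements $b_\lambda$ themselves, and replacing Proposition~\ref{pieri1} by its $b$-analogue Proposition~\ref{pieri2}. The key point is that the entire argument in Proposition~\ref{Jacobi2} used only two inputs about the $S_\mu$'s: (i) that $S_{(0)}\equiv 1$, which allowed the order of the determinant to be truncated to the length of the partition; and (ii) the Pieri rule. Here the analogue of (i) is even cleaner, since $b_{(0)}=b_\emptyset$ is the identity element of the basic algebra $B$ (by Definition~\ref{blamDFN}, $b_\emptyset=\psi_{w_{0,n}}y_{\min}\psi_{w_{0,n}}y_{\min}$, which is the distinguished idempotent), and the $b_\mu$'s genuinely commute by Lemma~\ref{commutation0}, so the determinants are honest determinants in the commutative ring $B$ — no ``modulo $J_n$'' caveat is needed.

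First I would observe that, since $b_{(0)}$ is the identity of $B$, expanding $\det(b_{(\lambda_i-i+j)})_{1\le i,j\le n}$ shows it equals $\det(b_{(\lambda_i-i+j)})_{1\le i,j\le l}$ where $l=\ell(\lambda)$, because the extra rows/columns beyond $l$ contribute a block that is upper-triangular with $b_{(0)}=1$ on the diagonal (the entries $b_{(\lambda_i-i+j)}$ with $i>l$, $j<i$ vanish as $\lambda_i-i+j<0$). This reduces us to an induction on $l$. Then I would expand the $l\times l$ determinant along its last column to obtain, in $B$,
$$
\det(b_{(\lambda_i-i+j)})_{1\le i,j\le l}=\sum_{i=1}^{l}(-1)^{l-i}\,b_{(\lambda_1,\dots,\lambda_{i-1},\lambda_{i+1}-1,\dots,\lambda_l-1)}\,b_{(\lambda_i-i+l)},
$$
using the induction hypothesis to identify each cofactor (an $(l-1)\times(l-1)$ determinant) with $b_{(\lambda_1,\dots,\lambda_{i-1},\lambda_{i+1}-1,\dots,\lambda_l-1)}$.

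Next, exactly as in the proof of Proposition~\ref{Jacobi2}, I would introduce for $1\le t\le l+1$ the set $A_t$ of partitions $\mu=(\mu_1,\dots,\mu_l)$ with $|\mu|=|\lambda|$ and $\lambda_j\le\mu_j\le\lambda_{j-1}$ for $j<t$, $\lambda_{j+1}-1\le\mu_j\le\lambda_j-1$ for $j\ge t$ (with $\lambda_0=+\infty$, $\lambda_{l+1}=0$), so that $A_1=\emptyset$, $A_{l+1}=\{\lambda\}$, and one has the combinatorial decomposition $(\lambda_1,\dots,\lambda_{i-1},\lambda_{i+1}-1,\dots,\lambda_l-1)\otimes(\lambda_i+l-i)=A_i\sqcup A_{i+1}$. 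Applying the second Pieri formula of Proposition~\ref{pieri2} term by term gives $b_{(\lambda_1,\dots,\lambda_{i-1},\lambda_{i+1}-1,\dots,\lambda_l-1)}b_{(\lambda_i-i+l)}=\sum_{\mu\in A_i}b_\mu+\sum_{\mu\in A_{i+1}}b_\mu$, and the alternating signs produce a telescoping cancellation, leaving $\sum_{\mu\in A_{l+1}}b_\mu=b_\lambda$. The second (dual) identity is proved the same way: truncate the determinant to size $l'=\ell(\lambda')$, expand along the last column, use induction together with the conjugation identity $\nu_i\otimes 1^{\lambda'_i-i+l'}=((\nu_i)'\otimes(\lambda'_i-i+l'))'$ and the sets $B_t$ (conjugates of the analogous sets), and apply the \emph{first} Pieri formula of Proposition~\ref{pieri2}; cancellation then yields $b_\lambda$.

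The only genuine point to be careful about — and the place I expect the argument to need the most attention — is that the combinatorial identities $A_i\sqcup A_{i+1}=(\lambda_1,\dots,\lambda_{i-1},\lambda_{i+1}-1,\dots,\lambda_l-1)\otimes(\lambda_i+l-i)$ (and their conjugate versions with $B_t$) hold exactly as stated, and that the truncation of the determinant is legitimate. But since $b_{(0)}$ is literally the unit of the commutative ring $B$, and the $b$-Pieri rule of Proposition~\ref{pieri2} is an exact identity in $B$ (not merely modulo $J_n$), both of these are established by the identical bookkeeping already carried out in Proposition~\ref{Jacobi2}; indeed, the statement here is formally stronger (an equality in $B$ rather than a congruence modulo $J_n$) but the proof is strictly easier. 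Hence I would simply say that the proof is entirely parallel to that of Proposition~\ref{Jacobi2}, with Proposition~\ref{pieri2} in place of Proposition~\ref{pieri1} and with the observation that $b_{(0)}$ is the identity of $B$ replacing the congruence $S_{(0)}\equiv 1\bmod J_n$, and fill in only the last-column expansion and the telescoping step in detail.
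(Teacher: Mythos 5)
Your proposal is correct and follows essentially the same route as the paper's own proof: note that $b_{(0)}$ acts as the identity so the determinant can be truncated to the length of the partition, induct on that length, expand along the last column, and apply the Pieri formulas of Proposition~\ref{pieri2} together with the telescoping cancellation over the sets $A_i$ (resp.\ $B_i$) exactly as in the proof of Proposition~\ref{Jacobi2}. Your additional observation that Lemma~\ref{commutation0} makes the determinants honest determinants in the commutative ring $B$, with no ``modulo $J_n$'' caveat, is a correct and slightly cleaner framing of the same argument.
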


\begin{proof}
First, by Corollary \ref{the equivalence between zlam and Schur polynomial}, note that
$$b_{(0)}=b_{(1^0)}=\psi_{w_{0}}y_1^{n-1}\cdots y_{n-1}\psi_{w_{0}}y_1^{n-1}\cdots y_n\equiv 1\mod J_n.$$
By a simple computation, one can easily observe that the determinants in the theorem remain unchanged if we restrict the order of the determinants to the corresponding length of the partitions. The observation allows us to use induction on the length of the partition (or its conjugate) to prove the formulas.

For the first formula, we use induction on the length $l:=\ell(\lam)$ of $\lambda$.
Expanding the determinant along the last column, we have following
alternating sum,
$$\sum_{i=1}^{l}(-1)^{l-i}b_{(\lambda_1,\cdots,\lambda_{i-1},\lambda_{i+1}-1,\cdots,\lambda_l-1)}b_{(\lambda_i-i+l)}.$$

For each $1\leq t\leq l+1$, we denote by $B_t$ the set of all partitions $\mu=(\mu_1,\cdots,\mu_l)$ with $|\mu|=|\lambda|$ and
such that $\lambda_j\leq \mu_j\leq \lambda_{j-1}$ for $j<t$,
and $\lambda_{j+1}-1\leq \mu_j\leq \lambda_j-1$ for $j\geq t$, where $\lambda_0:=+\infty, \lambda_{l+1}:=0$.
By definition, $B_1=\emptyset$ and $B_{l+1}=\{\lambda\}$.

For each $1\leq i\leq l$, by definition, we have a decomposition:
$$(\lambda_1,\cdots,\lambda_{i-1},\lambda_{i+1}-1,\cdots,\lambda_l-1)\otimes(\lambda_i+l-i)=B_i\sqcup B_{i+1}.$$
Using Pieri's Formula (the second formula in Proposition \ref{pieri2}), we may write the $i$th term
$$b_{(\lambda_1,\cdots,\lambda_{i-1},\lambda_{i+1}-1,\cdots,\lambda_l-1)}b_{(\lambda_i-i+l)}$$ in following form,
$$\sum_{\mu\in B_i}b_{\mu}+\sum_{\mu\in B_{i+1}}b_{\mu}$$
Therefore, after cancelations, we have the first formula.

For the second formula, we use induction on the length $l^{\prime}$ of the conjugate partition $\lambda^{\prime}=(\lambda^{\prime}_1,\dots,\lambda^{\prime}_{l^{\prime}})$.
Expanding the determinant along the last column, we have following alternating sum,
$$\sum_{i=1}^{l^{\prime}}(-1)^{l^{\prime}-i}b_{\nu_i}b_{1^{\lambda^{\prime}_i-i+l^{\prime}}}$$
where $\nu_i:=(\lambda^{\prime}_1,\cdots,\lambda^{\prime}_{i-1},\lambda^{\prime}_{i+1}-1,
\cdots,\lambda^{\prime}_{l^{\prime}}-1)^{\prime}$.

Note that
$$\nu_i\otimes1^{\lambda^{\prime}_i-i+l^{\prime}}=((\nu_i)^{\prime}\otimes(\lambda^{\prime}_i-i+l^{\prime}))^{\prime}$$

For each $1\leq t\leq l^{\prime}+1$, we denote by $C_t$ the set of all partitions $\mu=(\mu_1,\cdots,\mu_{l^{\prime}})$ with
$|\mu|=|\lambda^{\prime}|$ and such that $\lambda^{\prime}_j\leq \mu_j\leq \lambda^{\prime}_{j-1}$ for $j<t$,
and $\lambda^{\prime}_{j+1}-1\leq \mu_j\leq \lambda^{\prime}_j-1$ for $j\geq t$, where $\lambda^{\prime}_0:=+\infty, \lambda^{\prime}_{l^{\prime}+1}:=0$.
By definition, $C_1=\emptyset$ and $C_{l^{\prime}+1}=\{\lambda^{\prime}\}$.

For each $1\leq i\leq l^{\prime}$, by definition, we have decomposition $(\nu_i)^{\prime}\otimes(\lambda'_i+l'-i)=C_i\sqcup C_{i+1}$, hence
$$\nu_i\otimes1^{\lambda^{\prime}_i-i+l^{\prime}}=(C_i)^{\prime}\sqcup (C_{i+1})^{\prime}.$$
Using Pieri's Formula (the first formula in Proposition \ref{pieri2}),
we may write the $i$th term $b(\nu_i)b(1^{\lambda^{\prime}_i-i+l^{\prime}})$
in following form,
$$\sum_{\mu\in (C_i)^{\prime}}b_{\mu}+\sum_{\mu\in (C_{i+1})^{\prime}}b_{\mu}.$$
Therefore, after cancelations, we have the second formula.
\end{proof}

\begin{lem}\text{(Pieri formula, \cite[(3.6)]{Hi})} For each $(a_1,\cdots,a_{n})\in\Theta_{\ell,n}$ and $0\leq j\leq \ell-n$, $$
(a_1,\cdots,a_{n})\tilde{c}_j=\sum_{\substack{a_i\leq b_i\leq a_{i+1}\\ \sum b_i=j+\sum a_i}}(b_1,\cdots,b_{n}) .
$$
\end{lem}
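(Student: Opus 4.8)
The quickest route is simply to quote \cite[(3.6)]{Hi}: this identity is precisely the classical Pieri rule for the cohomology of the Grassmannian, and there is nothing to add. For the sake of keeping the paper self-contained, however, I would instead deduce it from Borel's presentation (Theorem \ref{borel}) together with the classical Pieri rule for Schur polynomials. Before starting, I would record the two conventions implicit in the statement: first, the index $i$ in $a_i\le b_i\le a_{i+1}$ runs over $1\le i\le n$ with the terminal value $a_{n+1}:=\ell-n$; second, the condition under the summation sign is to be read as $\sum_i b_i=j+\sum_i a_i$ (there is a plain misprint in the displayed formula). With these understood, the right-hand side is exactly the sum of those Schubert classes $(b_1,\dots,b_n)\in\Theta_{\ell,n}$ whose associated partition contains that of $(a_1,\dots,a_n)$ in a horizontal strip of size $j$.

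The plan of the derivation is then as follows. Under $\iota$ the cohomology becomes $\Z[x_1,\dots,x_n,\bar x_1,\dots,\bar x_{\ell-n}]/I_{n,\ell-n}$, and by the Giambelli (Jacobi--Trudi) determinant the Schubert class $(a_1,\dots,a_n)$ is identified with the Schur polynomial $s_{\lambda}$, where $\lambda=(a_n,a_{n-1},\dots,a_1)$ is the partition obtained by reversing $(a_1,\dots,a_n)$; in particular the special class $\tilde c_j=(0,\dots,0,j)$ corresponds to the $j$-th complete homogeneous symmetric polynomial $h_j$ in the relevant Chern roots. The classical Pieri rule (see \cite{Man} or \cite{Mac}) gives $s_{\lambda}\cdot h_j=\sum_{\mu}s_{\mu}$, the sum over all partitions $\mu\supseteq\lambda$ with $|\mu/\lambda|=j$ and $\mu/\lambda$ a horizontal strip. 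Inside the quotient ring one has $h_{\ell-n+t}=0$ for every $t\ge1$ — this is the commutative shadow of Lemma \ref{sum1}, and also follows directly from the defining relations of $I_{n,\ell-n}$ — so $s_{\mu}=0$ whenever $\mu_1>\ell-n$; hence only those $\mu$ fitting inside the $n\times(\ell-n)$ box survive, i.e. only those $(b_1,\dots,b_n)$ lying in $\Theta_{\ell,n}$. Unwinding: writing $\mu=(b_n,b_{n-1},\dots,b_1)$, the interlacing description of a horizontal strip, $\mu_1\ge\lambda_1\ge\mu_2\ge\lambda_2\ge\cdots\ge\mu_n\ge\lambda_n$, becomes exactly $a_i\le b_i\le a_{i+1}$ for $1\le i\le n$ with $a_{n+1}:=\ell-n$ (the extreme inequality $b_n\le\ell-n$ being the box constraint), while $|\mu|=|\lambda|+j$ becomes $\sum_i b_i=j+\sum_i a_i$. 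This gives the asserted formula.

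The only point that genuinely requires care — and hence the main obstacle — is the bookkeeping in this last step: one must fix once and for all the reversal $(a_1,\dots,a_n)\leftrightarrow\lambda$, check that ``horizontal strip'' corresponds to the stated interlacing condition rather than to its transpose (which would produce a vertical strip and the dual Pieri rule for $e_j$), and make sure the box constraint is correctly absorbed into the convention $a_{n+1}:=\ell-n$. None of this is deep, but it is precisely where an off-by-one error would creep in, so I would first verify the whole chain on a small example, say $n=2$ and $\ell=4$, before writing out the general argument.
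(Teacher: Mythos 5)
The paper gives no proof of this lemma beyond the citation of \cite[(3.6)]{Hi}, so your first option (simply quoting Hiller) is exactly what the paper does. Your supplementary derivation from Borel's presentation plus the classical Pieri rule for $s_\lambda h_j$ is also sound — including the correct reading of the misprinted condition as $\sum_i b_i=j+\sum_i a_i$ and the correct unwinding of the horizontal-strip interlacing under the reversal $\mu_i=b_{n+1-i}$ — but it is extra material, not needed to match the paper.
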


\begin{lem}\text{(Giambelli formula, \cite[(3.7)]{Hi})} \label{Giam} For each $(a_1,\cdots,a_{n})\in\Theta_{\ell,n}$, $$
(a_1,\cdots,a_{n})=\det\Bigl(\tilde{c}_{a_i+i-j}\Bigr)_{1\leq i,j\leq n} ,
$$
where by convention $\tilde{c}_t:=0$ if $t$ is not between $0$ and $\ell-n$.
\end{lem}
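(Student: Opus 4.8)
The plan is to deduce the Giambelli formula from the Pieri formula of the preceding lemma by an inductive, purely combinatorial argument running in exact parallel with the proof of Proposition \ref{Jacobi2} (the nilHecke analogue of Jacobi--Trudi). The first step is to set up a dictionary: to a Schubert index $(a_1,\dots,a_n)\in\Theta_{\ell,n}$ I associate the partition $\lambda=(a_n,a_{n-1},\dots,a_1)$, so that $\ell(\lambda)\le n$ and $\lambda_1\le\ell-n$. Under this correspondence the special classes $\tilde c_j=(0,\dots,0,j)$ take on the role of the complete symmetric polynomials, and reversing \emph{simultaneously} the rows and the columns of the matrix $\bigl(\tilde c_{a_i+i-j}\bigr)_{1\le i,j\le n}$ --- a double reversal whose two sign factors $(-1)^{n(n-1)/2}$ cancel --- sends its $(i,j)$ entry to $\tilde c_{\lambda_i-i+j}$. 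Thus the asserted identity is equivalent to $(a_1,\dots,a_n)=\det\bigl(\tilde c_{\lambda_i-i+j}\bigr)_{1\le i,j\le n}$, that is, to the Jacobi--Trudi shape with $\tilde c_j$ in place of $h_j$. One also checks here, exactly as in the proof of Proposition \ref{Jacobi2}, that the Pieri formula of the preceding lemma translates under the dictionary into the horizontal-strip form $(\lambda)\,\tilde c_j=\sum_{\mu\in\lambda\otimes j}(\mu)$.

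Next I would record that $\tilde c_0=(0,\dots,0)$ is the unit of $\Hc^*(\mathbb{G}_{n,\ell-n},\Z)$ and that $\tilde c_t=0$ for $t<0$, so that the determinant may be truncated to size $l:=\ell(\lambda)$ without changing its value; this permits an induction on $l$, with $l=0$ trivial. For the inductive step, expand $\det\bigl(\tilde c_{\lambda_i-i+j}\bigr)_{1\le i,j\le l}$ along its last column, obtaining the alternating sum $\sum_{i=1}^{l}(-1)^{l-i}(\lambda_1,\dots,\lambda_{i-1},\lambda_{i+1}-1,\dots,\lambda_l-1)\,\tilde c_{\lambda_i-i+l}$, in which each first factor is the Schubert class of a partition of length $l-1$ and so is known by induction. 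Applying the Pieri formula to each product $(\lambda_1,\dots,\lambda_{i-1},\lambda_{i+1}-1,\dots,\lambda_l-1)\,\tilde c_{\lambda_i+l-i}$ rewrites it as $\sum_{\mu\in A_i}(\mu)+\sum_{\mu\in A_{i+1}}(\mu)$, where $A_1,\dots,A_{l+1}$ are precisely the interval sets of partitions introduced in the proof of Proposition \ref{Jacobi2}, satisfying $A_1=\emptyset$, $A_{l+1}=\{\lambda\}$ and $(\lambda_1,\dots,\lambda_{i-1},\lambda_{i+1}-1,\dots,\lambda_l-1)\otimes(\lambda_i+l-i)=A_i\sqcup A_{i+1}$. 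The telescoping cancellation in the alternating sum then leaves exactly $(\lambda)$, and translating back through the dictionary gives the Giambelli formula.

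The only genuinely delicate point is the bookkeeping of the change of indexing conventions --- the Schubert index $(a_1,\dots,a_n)$ with increasing entries versus the partition $\lambda$ with decreasing entries, together with the matching reversal of the Pieri rule --- but once that dictionary is fixed the combinatorics is word for word that of the proof of Proposition \ref{Jacobi2}, so I anticipate no substantive obstacle. Alternatively, and more economically, one may simply invoke Hiller \cite[(3.7)]{Hi}, or derive the identity from Borel's presentation (Theorem \ref{borel}) together with the classical Jacobi--Trudi identity: under the isomorphism $\iota$ the class $\tilde c_j$ becomes the $j$th complete symmetric polynomial in the Chern roots of the tautological bundle and the Schubert class $(a_1,\dots,a_n)$ becomes the Schur polynomial $s_\lambda$ in those roots, whence the determinantal formula is immediate.
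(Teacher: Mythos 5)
Your argument is correct, but it takes a genuinely different route from the paper: the paper offers no proof of Lemma \ref{Giam} at all, simply citing Hiller \cite[(3.7)]{Hi} (and devoting a Remark to correcting the sign conventions there, so that the determinant is in the classes $\tilde{c}_t$ rather than $\bar{c}_t$ --- your statement matches the corrected version). What you propose is to rederive Giambelli from the Pieri formula \cite[(3.6)]{Hi} by the classical column-expansion-and-telescoping argument; this is sound, and it is in fact literally the same combinatorial engine the paper runs twice elsewhere, for the elements $S_\lambda$ in Proposition \ref{Jacobi2} and for the elements $b_\lambda$ in Lemma \ref{Jacobi3}, so your claim that the bookkeeping transfers word for word is apt. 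The one point you should make explicit is the treatment of partitions falling outside the $n\times(\ell-n)$ box: the Pieri rule as stated in the paper sums only over $(b_1,\dots,b_n)$ with $b_i\le a_{i+1}$ and $b_n\le\ell-n$, whereas your ``horizontal-strip form'' $(\lambda)\,\tilde{c}_j=\sum_{\mu\in\lambda\otimes j}(\mu)$ and the telescoping sets $A_i$ (whose members satisfy only $\mu_1\le\lambda_0=+\infty$) require the convention that $(\mu):=0$ whenever $\mu_1>\ell-n$, consistently with $\tilde{c}_t:=0$ for $t>\ell-n$; with that convention the out-of-box terms cancel in pairs exactly as the in-box ones do, and the induction closes. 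Your alternative one-line derivation --- push the classical Jacobi--Trudi identity through Borel's presentation (Theorem \ref{borel}), under which $\tilde{c}_j$ is the image of $h_j$ and the Schubert class is the image of the Schur polynomial of the reversed partition --- is also valid and is arguably the cleanest way to sidestep the box issue, since the vanishing of $h_t$ for $t>\ell-n$ is built into the quotient. Either way you obtain a self-contained proof where the paper is content with a reference.
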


\begin{rem} We remark that there exist some obvious typos in \cite[(3.6), (3.7)]{Hi}. That is, the elements $\bar{c}_j$ and $\bar{c}_{a_i+j-i}$ in \cite[(3.6), (3.7)]{Hi} should be replaced by $(-1)^j\bar{c}_j$ and $(-1)^{a_i+i-j}\bar{c}_{a_i+i-j}$ respectively, see the proof of (3.7) in \cite[Page 117]{Hi}.
\end{rem}

The following proposition is the main result of this paper.

\begin{prop}\label{mainthprop2}
The $\Z$-algebra homomorphism $\eta$ constructed in Lemma \ref{homomorphism} is an isomorphism. Moreover, for any $(a_1,\cdots,a_{n})\in\Theta_{\ell,n}$, $$
\eta\bigl((a_1,\cdots,a_{n})\bigr)=b\bigl(\tau(a_1,\cdots,a_{n})\bigr),
$$
where $\tau$ is defined in Definition \ref{bijective}.
\end{prop}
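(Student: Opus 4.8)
The plan is to first compute $\eta$ on the special Schubert classes $\tilde c_j$, then feed this into Giambelli's formula (Lemma~\ref{Giam}) and compare the result with the Jacobi--Trudi formula for the $b$'s (Lemma~\ref{Jacobi3}); once the displayed formula is known, the isomorphism statement follows immediately by comparing bases.

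First I would record that $\eta(\tilde c_j)=b_{(j)}$ for every $j\in\Z$. Since $\tilde c_j=(-1)^j\bar c_j$ in $\Hc^*(\mathbb{G}_{n,\ell-n},\Z)$, the definition of $\eta$ in Lemma~\ref{homomorphism} gives, for $1\le j\le\ell-n$, that $\eta(\tilde c_j)=(-1)^j\cdot(-1)^jb(\bmu_j)=b_{(j)}$; moreover $\eta(\tilde c_0)=\eta(1)=1=b_{(0)}$, for $j<0$ both sides vanish by convention, and for $j>\ell-n$ we have $\tilde c_j=0$ by convention while $b_{(j)}=(-1)^{n(n-1)/2}\psi_{w_{0,n}}h_j(y_1,\dots,y_n)\psi_{w_{0,n}}y_{\min}=0$ by (\ref{vanishiing}), using $\psi_{w_{0,n}}g=0$ for $g\in J_n$ to discard the $J_n$-part of $y_1^jy_{\min}\psi_{w_{0,n}}$. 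Thus $\eta(\tilde c_j)=b_{(j)}$ holds identically, with all vanishing conventions matching.

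Next I would evaluate $\rho\circ\tau$: substituting Definition~\ref{bijective} into Definition~\ref{injective} shows $\rho\circ\tau(a_1,\dots,a_n)=(a_n,a_{n-1},\dots,a_1)$, which is a genuine partition of length $\le n$ because $a_1\le\cdots\le a_n$; write $\lambda:=(a_n,\dots,a_1)$, so $\lambda_i=a_{n+1-i}$. Applying $\eta$ to Giambelli's identity $(a_1,\dots,a_n)=\det\bigl(\tilde c_{a_i+i-j}\bigr)_{1\le i,j\le n}$, and using that $\eta$ is a homomorphism of commutative rings (hence commutes with forming determinants) together with the previous step, I obtain
$$\eta\bigl((a_1,\dots,a_n)\bigr)=\det\bigl(b_{(a_i+i-j)}\bigr)_{1\le i,j\le n}.$$
Now the matrix $\bigl(b_{(a_i+i-j)}\bigr)_{i,j}$ is obtained from the Jacobi--Trudi matrix $\bigl(b_{(\lambda_i-i+j)}\bigr)_{i,j}$ by simultaneously reversing the order of the rows and of the columns: the substitution $(i,j)\mapsto(n+1-i,n+1-j)$ sends the entry $b_{(\lambda_i-i+j)}$ to $b_{(\lambda_{n+1-i}-(n+1-i)+(n+1-j))}=b_{(a_i+i-j)}$. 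The two reversals contribute reciprocal signs, so the two determinants agree, and by Lemma~\ref{Jacobi3} their common value is $b_\lambda=b_{\rho\circ\tau(a_1,\dots,a_n)}$. This proves the displayed formula.

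Finally, for the isomorphism: by Definitions~\ref{injective} and~\ref{bijective}, $\rho\circ\tau$ maps $\Theta_{\ell,n}$ bijectively onto $\rho(\P_{\ell,n})$, so as $(a_1,\dots,a_n)$ ranges over $\Theta_{\ell,n}$ the elements $b_{\rho\circ\tau(a_1,\dots,a_n)}$ range, without repetition, over the $\Z$-basis $\{b(\blam)\mid\blam\in\P_{\ell,n}\}$ of $B$. Since the Schubert classes form a $\Z$-basis of $\Hc^*(\mathbb{G}_{n,\ell-n},\Z)$ (Lemma~\ref{basis2}), the formula just proved shows that $\eta$ carries a $\Z$-basis bijectively onto a $\Z$-basis, hence $\eta$ is an isomorphism of $\Z$-modules and, being an algebra map, a $\Z$-algebra isomorphism. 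I expect the only real friction to be bookkeeping --- tracking the signs $(-1)^j$ relating $\tilde c_j$ and $\bar c_j$, the row/column reversal of the Giambelli matrix, and the matching of the vanishing conventions for $\tilde c_j$ and $b_{(j)}$ at the boundary values of $j$ --- since no genuinely new ingredient beyond Lemmas~\ref{Giam} and~\ref{Jacobi3} and the vanishing (\ref{vanishiing}) is needed.
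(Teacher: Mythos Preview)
Your proof is correct and follows essentially the same route as the paper: compute $\rho\circ\tau(a_1,\dots,a_n)=(a_n,\dots,a_1)$, combine Giambelli (Lemma~\ref{Giam}) with the Jacobi--Trudi identity for the $b$'s (Lemma~\ref{Jacobi3}) via the row/column reversal, and deduce the isomorphism by matching bases. You are in fact slightly more careful than the paper in verifying that $b_{(j)}=0$ in $\HH_{\ell,n}^{(0)}$ for $j>\ell-n$ (via (\ref{vanishiing})), which is needed to match the Giambelli convention $\tilde c_j=0$ for such $j$ but is left implicit in the paper's argument.
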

\begin{proof} By Lemma \ref{basis2} and Lemma \ref{homomorphism}, it suffices to prove the second part of the proposition.

By (\ref{taudfn}), $$\tau(a_1,\cdots,a_{n})=(\ell+1-(a_n+n),\cdots,\ell+1-(a_2+2),\ell+1-(a_1+1)).$$ Let $\lam=(\lam_1,\cdots,\lam_n):=\rho\circ\tau(a_1,\cdots,a_{n})$. Then $\lam_i=a_{n+1-i},\,\forall\,1\leq i\leq n$.

Now, using Lemma \ref{Jacobi3} and Lemma \ref{Giam}, we get that $$\begin{aligned}
b\bigl(\tau(a_1,\cdots,a_{n})\bigr)&=\pi_{\ell}\bigl(b_{\rho\circ\tau(a_1,\dots,a_n)}\bigr)\\
&=\det\Bigl(\pi_{\ell}\bigl(b_{(\lam_i-i+j)}\bigr)\Bigr)_{1\leq i,j\leq n}\\
&=\det\Bigl(b(\bmu_{\lam_i-i+j})\Bigr)_{1\leq i,j\leq n}\\
&=\det\Bigl(b(\bmu_{a_{n+1-i}-i+j})\Bigr)_{1\leq i,j\leq n}\\
&=\det\Bigl(b(\bmu_{a_{i}+i-j})\Bigr)_{1\leq i,j\leq n}\\
&=\det\Bigl(\eta\bigl(\tilde{c}_{a_{i}+i-j}\bigr)\Bigr)_{1\leq i,j\leq n}\\
&=\eta\Bigl(\det\bigl(\tilde{c}_{a_i+i-j}\bigr)_{1\leq i,j\leq n}\Bigr)=\eta\bigl((a_1,\cdots,a_{n})\bigr), \end{aligned}
$$
as required. This completes the proof of the proposition.
\end{proof}

\bigskip

\section{The duality}

In this section, we shall first show that there is a similar $\Z$-algebra isomorphism between the cohomology algebra $\Hc^*(\mathbb{G}_{\ell-n,\ell},\mathbb{Z})$
and the natural $\Z$-form $B$ of the basic algebra of $\HH_{\ell,n}^{(0)}$. Then we shall use it to explicitly describe the image of each Schubert class basis element, which is again another Schubert class basis element, under certain natural isomorphism between $\Hc^*(\mathbb{G}_{n,\ell},\mathbb{Z})$ and $\Hc^*(\mathbb{G}_{\ell-n,\ell},\mathbb{Z})$.

By definition,  $$
\Theta_{\ell,\ell-n}:=\bigl\{(a_1,\cdots,a_{\ell-n})\bigm|0\leq a_1\leq\cdots\leq a_{\ell-n}\leq n, a_i\in\Z,\forall\,i\bigr\} .
$$

\begin{dfn}\label{bijective2} Let $\hat{\tau}$ be the bijection between $\Theta_{\ell,\ell-n}$ and $\P_{\ell,n}$ which is defined as follows:
for any $(\ell-n)$-tuple $\underline{a}=(a_1,\dots,a_{\ell-n})\in\Theta_{\ell,\ell-n}$, $\hat{\tau}{(\underline{a})}$ is the unique $n$-tuple $\blam=(k_1,\dots,k_n)\in\P_{\ell,n}$
such that $\blam$ is obtained from deleting the $(\ell-n)$-tuple $(a_1+1,\cdots,a_{\ell-n}+\ell-n)$ inside the $\ell$-tuple $(1,2,\cdots,\ell)$.
\end{dfn}

\begin{thm}\label{mainthm2}
Let $B$ be the natural $\Z$-form of the $\Z$-graded basic algebra of $\HH_{\ell,n}^{(0)}$.
The map $\hat{\eta}$ which sends $c_i=x_i+I_{\ell-n,n}$ to $b(\bmu_i)$ for each $1\leq i\leq \ell-n$, and
$\bar{c}_j=\bar{x}_j+I_{\ell-n,n}$ to $(-1)^jb(\blam_j)$ for each $1\leq j\leq n$,
extends uniquely to a well-defined $\Z$-algebra isomorphism $\hat{\eta}: \Hc^*(\mathbb{G}_{\ell-n,\ell},\Z)\rightarrow B$.
Moreover, for any $(a_1,\cdots,a_{\ell-n})\in\Theta_{\ell,\ell-n}$,
$$\hat{\eta}\bigl((a_1,\cdots,a_{\ell-n})\bigr)=b\bigl(\hat{\tau}(a_1,\cdots,a_{\ell-n})\bigr).$$
\end{thm}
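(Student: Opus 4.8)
The plan is to mirror the proof of Proposition \ref{mainthprop2}, since Theorem \ref{mainthm2} is exactly its counterpart with the roles of $n$ and $\ell-n$ interchanged. First I would note that $\mathbb{G}_{\ell-n,n}$ is the Grassmannian of $(\ell-n)$-dimensional subspaces of $\mathbb{C}^{\ell}$, so Borel's theorem (Theorem \ref{borel}) applies with $(\ell-n)$ Chern-class variables $x_1,\dots,x_{\ell-n}$ and $n$ normal-Chern-class variables $\bar x_1,\dots,\bar x_n$, subject to the ideal $I_{\ell-n,n}$ generated by the coefficients of $(1+x_1t+\dots+x_{\ell-n}t^{\ell-n})(1+\bar x_1t+\dots+\bar x_nt^n)=1$. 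The key observation is that the relations defining $I_{\ell-n,n}$ are the \emph{same} polynomial identities (with $e$'s and $h$'s swapped) that underlie $I_{n,\ell-n}$, so the verification that $\hat\eta$ is a well-defined $\Z$-algebra homomorphism is literally the proof of Lemma \ref{homomorphism} with the roles of $b_{(i)}$ and $b_{(1^j)}$ interchanged; this works because Lemma \ref{commutation2} and Lemma \ref{commutation3} are symmetric in the sense that $b_{(s)}$ plays the role of $h_s(y_1,\dots,y_n)$ and $b_{(1^k)}$ that of $e_k(y_1,\dots,y_n)$, and Lemma \ref{alternating} together with the vanishing $h_{\ell-n+t}(y_1,\dots,y_n)=0$ from \eqref{vanishiing} (equivalently, $e_{n+t}=0$ automatically since there are only $n$ variables) close all three families of relations.

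Next I would prove surjectivity (hence bijectivity, by comparing ranks via Lemma \ref{basis2} for $\mathbb{G}_{\ell-n,n}$ and Lemma \cite[Lemma 3.2]{HuLiang} for $B$). As in Proposition \ref{mainthprop2}, this reduces to showing that the Schubert class basis elements are hit, and for that one uses the Giambelli formula (Lemma \ref{Giam}), now applied to $\mathbb{G}_{\ell-n,n}$: for $(a_1,\dots,a_{\ell-n})\in\Theta_{\ell,\ell-n}$ one has $(a_1,\dots,a_{\ell-n})=\det(\tilde c_{a_i+i-j})_{1\le i,j\le\ell-n}$, and I would apply the other Jacobi--Trudi formula from Lemma \ref{Jacobi3}, namely $b_\lambda=\det(b_{(1^{\lambda'_i-i+j})})_{1\le i,j\le m}$ with $m=\ell-n$, to the conjugate partition. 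The combinatorial bookkeeping is the identification of $\rho\circ\hat\tau(a_1,\dots,a_{\ell-n})$: I would show that if $\mu=\rho\circ\hat\tau(\underline a)$ then $\mu'=(a_{\ell-n},\dots,a_1)$ (the conjugate of $\mu$ is the reversal of $\underline a$), so that $\det(b_{(1^{\mu'_i-i+j})})=\det(b_{(1^{a_{\ell-n+1-i}-i+j})})=\det(b_{(1^{a_i+i-j})})$ after the standard row/column reversal, and then $b_{(1^{a_i+i-j})}=\hat\eta((-1)^{a_i+i-j}\bar c_{a_i+i-j})$, up to the sign which is absorbed exactly as in Proposition \ref{mainthprop2} and the Remark following Lemma \ref{Giam}.

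The main obstacle, and the only place requiring genuine care, is pinning down the correct combinatorial dictionary for $\hat\tau$ and $\rho$: one must check that deleting $(a_1+1,\dots,a_{\ell-n}+\ell-n)$ from $(1,\dots,\ell)$ and then applying $\rho$ produces precisely the partition whose conjugate is the reversal of $(a_1,\dots,a_{\ell-n})$, so that the Giambelli determinant for $\mathbb{G}_{\ell-n,n}$ matches the Jacobi--Trudi determinant for $b_\mu$ entry by entry after the appropriate transposition and sign reversal. This is where the "duality" in the section title really lives, and it is worth spelling out carefully; everything else is a transcription of the arguments already given in Sections 2 and 3 with $e_k\leftrightarrow h_k$ and $n\leftrightarrow\ell-n$ exchanged. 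Once the dictionary is fixed, the identity $\hat\eta((a_1,\dots,a_{\ell-n}))=b_{\rho\circ\hat\tau(a_1,\dots,a_{\ell-n})}$ follows by the same five-step chain of equalities as in the proof of Proposition \ref{mainthprop2}, and bijectivity is immediate since $\hat\eta$ sends a basis to a basis.
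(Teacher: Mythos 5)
Your proposal is correct and follows essentially the same route as the paper: well-definedness of $\hat{\eta}$ by mimicking Lemma \ref{homomorphism} with $e_k\leftrightarrow h_k$ swapped, and the identity $\hat{\eta}\bigl((a_1,\cdots,a_{\ell-n})\bigr)=b_{\rho\circ\hat{\tau}(a_1,\cdots,a_{\ell-n})}$ via the conjugate Jacobi--Trudi formula of Lemma \ref{Jacobi3} matched against the Giambelli determinant after row/column reversal. The combinatorial dictionary you flag as the delicate point, namely $\bigl(\rho\circ\hat{\tau}(a_1,\cdots,a_{\ell-n})\bigr)'=(a_{\ell-n},\cdots,a_1)$, is precisely the claim (\ref{conjugate1}) the paper establishes by computing $\hat{\tau}^{-1}$ explicitly, so filling in that verification completes your argument exactly as in the paper.
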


\begin{proof} Mimicking the proof of Lemma \ref{homomorphism}, one can show that $\hat{\eta}$ is a well-defined $\Z$-algebra homomorphism. It suffices to prove the second part of the theorem.

We first claim that \begin{equation}\label{conjugate1}
\bigl(\rho\circ\hat{\tau}(a_1,\cdots,a_{\ell-n})\bigr)'=(a_{\ell-n},a_{\ell-n-1},\cdots,a_2,a_1) .
\end{equation}

In fact, write $\blam=(k_1,\cdots,k_n):=\hat{\tau}(a_1,\cdots,a_{\ell-n})$. By a direct verification, we can see that $$
(a_1,\cdots,a_{\ell-n})=\bigl(\underbrace{0,\cdots,0,}_{\text{$k_1-1$ copies}}\underbrace{1,\cdots,1,}_{\text{$k_2-k_1-1$ copies}}\cdots,
\underbrace{n-1,\cdots,n-1,}_{\text{$k_n-k_{n-1}-1$ copies}}\underbrace{n,\cdots,n}_{\text{$\ell-k_n$ copies}}\bigr).
$$
In other words, the inverse $\hat{\tau}^{-1}$ of $\hat{\tau}$ is given by $$
\hat{\tau}^{-1}(\blam)=\bigl(\underbrace{0,\cdots,0,}_{\text{$k_1-1$ copies}}\underbrace{1,\cdots,1,}_{\text{$k_2-k_1-1$ copies}}\cdots,
\underbrace{n-1,\cdots,n-1,}_{\text{$k_n-k_{n-1}-1$ copies}}\underbrace{n,\cdots,n}_{\text{$\ell-k_n$ copies}}\bigr)\in\Theta_{\ell,\ell-n}.
$$

By definition, $$
\rho(\blam)=\bigl(\ell-k_1-(n-1),\ell-k_2-(n-2),\cdots,\ell-k_{n-1}-1,\ell-k_n\bigr) .
$$
For any integer $k\geq 1$, we have that $\rho(\blam)'_{k}=\#\{t\geq 1\mid\rho(\blam)_t\geq k\}$. Let $s$ be the maximal integer such that $$
\ell-k_s-(n-s)\geq k,\quad \ell-k_t-(n-t)\leq k-1,\qquad\forall\,s+1\leq t\leq n .
$$
In particular, this implies that $k_{s+1}-k_s-1>0$. Since $$
\#\{i\mid a_i=(\hat{\tau}^{-1}(\blam))_i>s\}=(k_{s+2}-k_{s+1}-1)+(k_{s+3}-k_{s+2}-1)+\cdots+(\ell-k_n)=\ell-k_{s+1}-(n-s-1)\leq k-1,
$$
it follows that $a_{\ell-n-(k-1)}\leq s$. On the other hand, $a_{\ell-n-(k-1)}<s$ can not happen because $$
\#\{i|a_i=(\hat{\tau}^{-1}(\blam))_i>s-1\}=(k_{s+1}-k_{s}-1)+(k_{s+2}-k_{s+1}-1)+\cdots+(\ell-k_n)=\ell-k_{s}-(n-s)\geq k .
$$
This proves that $a_{\ell-n-(k-1)}=s$ as required. It is clear $s=\rho(\blam)'_k$.
Hence $\rho(\blam)^{\prime}_k=a_{\ell-n+1-k}$. This completes the proof of claim (\ref{conjugate1}).

Let $\mu=(\mu_1,\cdots,\mu_n):=\rho\circ\tau(a_1,\cdots,a_{\ell-n})$. Now, using Lemma \ref{Jacobi3} and Lemma \ref{Giam},
we get that
$$\begin{aligned}
b\bigl(\hat{\tau}(a_1,\cdots,a_{\ell-n})\bigr)&=\pi_{\ell}\bigl(b_{\rho\circ\hat{\tau}(a_1,\cdots,a_{\ell-n})}\bigr)\\
&=\det\Bigl(\pi_{\ell}\bigl(b_{(1^{\mu^{\prime}_i-i+j})}\bigr)\Bigr)_{1\leq i,j\leq \ell-n}\\
&=\det\Bigl(b(\blam_{\mu^{\prime}_i-i+j})\Bigr)_{1\leq i,j\leq \ell-n}\\
&=\det\Bigl(b(\blam_{a_{\ell-n+1-i}-i+j})\Bigr)_{1\leq i,j\leq \ell-n}\\
&=\det\Bigl(b(\blam_{a_{i}+i-j})\Bigr)_{1\leq i,j\leq \ell-n}\\
&=\det\Bigl(\hat{\eta}\bigl(\tilde{c}_{a_{i}+i-j}\bigr)\Bigr)_{1\leq i,j\leq \ell-n}\\
&=\hat{\eta}\Bigl(\det\bigl(\tilde{c}_{a_i+i-j}\bigr)_{1\leq i,j\leq \ell-n}\Bigr)=\hat{\eta}\bigl((a_1,\cdots,a_{\ell-n})\bigr),
\end{aligned}$$
as required. This completes the proof of the proposition.
\end{proof}

\begin{rem} We remark that Xingyu Dai has considered the inverse map $\hat{\tau}^{-1}$ and claimed the existence of a similar isomorphism in \cite[Theorem 3.1]{Dai}. However, there are several serious gaps in his ``proof". Not only did he mix the Grassmannian $\mathbb{G}_{n,\ell}$ with the Grassmannian $\mathbb{G}_{\ell-n,\ell}$ in \cite{Dai} but also the key results (\cite[Lemma 3.8, Proposition 3.9]{Dai}) on which his ``proof" of  \cite[Theorem 3.1]{Dai} relies are both false.
\end{rem}

Recall that there is a natural isomorphism $\zeta$ between the cohomology algebra $\Hc^*(\mathbb{G}_{n,\ell},\Z)=\Z\<c_1,\cdots,c_n,\bar{c}_1,\cdots,\bar{c}_{\ell-n}\>$ and the cohomology algebra $\Hc^*(\mathbb{G}_{\ell-n,\ell},\Z)=\Z\<c_1,\cdots,c_{\ell-n},\bar{c}_1,\cdots,\bar{c}_{n}\>$ which is defined on generators by $$
\zeta(c_i):=(-1)^i\bar{c}_i,\quad \zeta(\bar{c}_j):=(-1)^jc_j,\qquad \forall\,1\leq i\leq n,\, 1\leq j\leq \ell-n .
$$

\begin{thm} \label{mainthm3} With the notations as above, we have that, for any $(a_1,\cdots,a_n)\in\Theta_{\ell,n}$, $\zeta\bigl((a_1,\cdots,a_n)\bigr)$ is equal to the Schubert class basis element represented by the $(\ell-n)$-tuple $\hat{\tau}^{-1}\circ\tau(a_1,\cdots,a_n)$.
\end{thm}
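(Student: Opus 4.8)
The plan is to reduce everything to the two isomorphism theorems already proved, namely Proposition \ref{mainthprop2} and Theorem \ref{mainthm2}, by checking that $\zeta$ is precisely the composite $\hat{\eta}^{-1}\circ\eta$. Concretely, I would first prove the identity $\hat{\eta}\circ\zeta=\eta$ of $\Z$-algebra homomorphisms $\Hc^*(\mathbb{G}_{n,\ell-n},\Z)\to B$. Since $\Hc^*(\mathbb{G}_{n,\ell-n},\Z)=\Z[c_1,\dots,c_n,\bar{c}_1,\dots,\bar{c}_{\ell-n}]/I_{n,\ell-n}$, it suffices to check this on the generators $c_i$ and $\bar{c}_j$. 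For $1\leq i\leq n$ we have $\zeta(c_i)=(-1)^i\bar{c}_i$, and by Theorem \ref{mainthm2} the class $\bar{c}_i$ in $\Hc^*(\mathbb{G}_{\ell-n,n},\Z)$ (here $i\leq n$, so this is legitimate) is sent by $\hat{\eta}$ to $(-1)^ib_{(1^i)}$; hence $\hat{\eta}(\zeta(c_i))=(-1)^i(-1)^ib_{(1^i)}=b_{(1^i)}=\eta(c_i)$, the last equality being Lemma \ref{homomorphism}. For $1\leq j\leq\ell-n$ we have $\zeta(\bar{c}_j)=(-1)^jc_j$, and by Theorem \ref{mainthm2} (here $j\leq\ell-n$) the class $c_j$ in $\Hc^*(\mathbb{G}_{\ell-n,n},\Z)$ is sent by $\hat{\eta}$ to $b_{(j)}$; hence $\hat{\eta}(\zeta(\bar{c}_j))=(-1)^jb_{(j)}=\eta(\bar{c}_j)$, again by Lemma \ref{homomorphism}. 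Thus $\hat{\eta}\circ\zeta$ and $\eta$ agree on generators, so they coincide.

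Since $\hat{\eta}$ is an isomorphism (Theorem \ref{mainthm2}), this gives $\zeta=\hat{\eta}^{-1}\circ\eta$. Now I would fix $(a_1,\dots,a_n)\in\Theta_{\ell,n}$ and put $\blam:=\tau(a_1,\dots,a_n)\in\P_{\ell,n}$. By Proposition \ref{mainthprop2}, $\eta\bigl((a_1,\dots,a_n)\bigr)=b_{\rho(\blam)}$, so $\zeta\bigl((a_1,\dots,a_n)\bigr)=\hat{\eta}^{-1}\bigl(b_{\rho(\blam)}\bigr)$. Setting $(b_1,\dots,b_{\ell-n}):=\hat{\tau}^{-1}(\blam)\in\Theta_{\ell,\ell-n}$, Theorem \ref{mainthm2} says $\hat{\eta}$ sends the Schubert class basis element $(b_1,\dots,b_{\ell-n})$ to $b_{\rho\circ\hat{\tau}(b_1,\dots,b_{\ell-n})}=b_{\rho(\blam)}$. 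Since $\{b_{\rho(\bnu)}\mid\bnu\in\P_{\ell,n}\}$ is a basis of $B$, the preimage $\hat{\eta}^{-1}\bigl(b_{\rho(\blam)}\bigr)$ is the unique Schubert class basis element $(b_1,\dots,b_{\ell-n})=\hat{\tau}^{-1}(\blam)=\hat{\tau}^{-1}\circ\tau(a_1,\dots,a_n)$, which is exactly the claimed statement. (If one prefers to avoid invoking the basis, the same conclusion follows from injectivity of $\rho$ together with the fact that $\tau$ and $\hat{\tau}$ are both bijections onto $\P_{\ell,n}$, which also shows $\hat{\tau}^{-1}\circ\tau\colon\Theta_{\ell,n}\to\Theta_{\ell,\ell-n}$ is a well-defined bijection, so the target tuple in the statement makes sense.)

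I do not expect a genuine obstacle here: the entire content sits in Proposition \ref{mainthprop2} and Theorem \ref{mainthm2}, and the present theorem is a formal consequence once the identity $\hat{\eta}\circ\zeta=\eta$ is established. The only points requiring care are the bookkeeping of the signs $(-1)^i$, $(-1)^j$ appearing in the definition of $\zeta$ (they cancel in pairs, as seen above), keeping track of which generators live in which index range for the two Grassmannians ($c_i,\bar{c}_j$ with $1\leq i\leq n$, $1\leq j\leq\ell-n$ for $\mathbb{G}_{n,\ell-n}$, versus $1\leq i\leq\ell-n$, $1\leq j\leq n$ for $\mathbb{G}_{\ell-n,n}$), and the routine observation that $\zeta$ is well-defined (the relation defining $I_{n,\ell-n}$ is carried to that defining $I_{\ell-n,n}$ after the substitution $t\mapsto-t$).
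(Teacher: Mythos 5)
Your proposal is correct and follows essentially the same route as the paper: the paper's proof is precisely the observation that $\zeta=\hat{\eta}^{-1}\circ\eta$ (checked on the generators $c_i,\bar{c}_j$) combined with Proposition \ref{mainthprop2} and Theorem \ref{mainthm2}. Your sign bookkeeping on the generators and the final identification of $\hat{\eta}^{-1}\bigl(b_{\rho\circ\tau(a_1,\dots,a_n)}\bigr)$ with the Schubert class of $\hat{\tau}^{-1}\circ\tau(a_1,\dots,a_n)$ are exactly what the paper leaves implicit.
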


\begin{proof} This follows easily from Proposition \ref{mainthprop2} and Theorem \ref{mainthm2} and the fact that $\zeta=\hat{\eta}^{-1}\circ\eta$, which can be checked directly on each generators $c_i, \bar{c}_j$, $i=1,2,\cdots,n, j=1,2,\cdots,\ell-n$.
\end{proof}

\begin{cor} The graded basic algebra of $\HH_{\ell,n}^{(0)}$ is isomorphic to the graded basic algebra of $\HH_{\ell,\ell-n}^{(0)}$. In particular, the cyclotomic nilHecke algebra $\HH_{\ell,n}^{(0)}$ is graded Morita equivalent to the cyclotomic nilHecke algebra $\HH_{\ell,\ell-n}^{(0)}$.
\end{cor}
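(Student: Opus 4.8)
The plan is to exploit the symmetry $n\leftrightarrow\ell-n$: one runs Theorem~\ref{mainthm2} a second time with $n$ replaced by $\ell-n$, and splices the result against Proposition~\ref{mainthprop2} through the common ring $\Hc^*(\mathbb{G}_{n,\ell-n},\Z)$.

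First I would observe that the isomorphisms already constructed are homogeneous of degree zero. A routine degree count in $\HH_{n}^{(0)}$ gives $\deg b_\lambda=2|\lambda|$ (the two copies of $\psi_{w_{0,n}}$ and the monomials $y_\lambda$, $y_{\min}$ cancel all the way down to $2|\lambda|$ after reordering), so $b(\blam_i)=b_{(1^i)}$ and $b(\bmu_j)=b_{(j)}$ sit in degrees $2i$ and $2j$, matching the cohomological degrees of $c_i$ and $\bar c_j$. Hence the map $\eta$ of Lemma~\ref{homomorphism}, which is an isomorphism by Proposition~\ref{mainthprop2}, is in fact a graded $\Z$-algebra isomorphism $\Hc^*(\mathbb{G}_{n,\ell-n},\Z)\xrightarrow{\ \sim\ }B$ onto the natural $\Z$-form $B$ of the basic algebra of $\HH_{\ell,n}^{(0)}$, and the same holds for $\hat\eta$ in Theorem~\ref{mainthm2}.

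Next I would apply Theorem~\ref{mainthm2} verbatim, but with $n$ replaced throughout by $\ell-n$; this is legitimate because $0\le\ell-n\le\ell$ (so $\HH_{\ell,\ell-n}^{(0)}\neq0$) and because $\mathbb{G}_{\ell-(\ell-n),\ell-n}=\mathbb{G}_{n,\ell-n}$. This yields a graded $\Z$-algebra isomorphism $\hat\eta^{\flat}\colon\Hc^*(\mathbb{G}_{n,\ell-n},\Z)\xrightarrow{\ \sim\ }B^{\flat}$, where $B^{\flat}$ denotes the natural $\Z$-form of the basic algebra of $\HH_{\ell,\ell-n}^{(0)}$. The composite $\hat\eta^{\flat}\circ\eta^{-1}\colon B\xrightarrow{\ \sim\ }B^{\flat}$ is then a graded $\Z$-algebra isomorphism, and tensoring with an arbitrary field $K$ (using $B\bigl(\HH_{\ell,n}^{(0)}(K)\bigr)\cong B\otimes_\Z K$ and its $\ell-n$ analogue) produces the asserted isomorphism $B\bigl(\HH_{\ell,n}^{(0)}\bigr)\cong B\bigl(\HH_{\ell,\ell-n}^{(0)}\bigr)$ of $\Z$-graded $K$-algebras.

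Finally, for the Morita statement I would recall, in the graded setting, the standard fact—made explicit here by Lauda's description of $\HH_{\ell,n}^{(0)}$ as a graded matrix algebra over its basic algebra (\cite{Lau2}; cf. the discussion after Theorem~\ref{borel})—that a finite-dimensional $\Z$-graded algebra is graded Morita equivalent to its graded basic algebra, via the graded projective bimodule $\HH_{\ell,n}^{(0)}e$ attached to a primitive homogeneous idempotent $e$; the same applies to $\HH_{\ell,\ell-n}^{(0)}$. Transitivity of graded Morita equivalence, combined with the isomorphism of basic algebras just obtained, then gives that $\HH_{\ell,n}^{(0)}$ is graded Morita equivalent to $\HH_{\ell,\ell-n}^{(0)}$. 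The single point that needs any care—rather than a genuine obstacle—is the degree bookkeeping in the first step, i.e.\ checking that $\eta$, $\hat\eta$ and hence the composite are degree-preserving; everything else is formal.
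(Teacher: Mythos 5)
Your proposal is correct and is essentially the argument the paper intends: splice the isomorphism of Proposition~\ref{mainthprop2} (or Theorem~\ref{mainthm2}) for $\HH_{\ell,n}^{(0)}$ against the one obtained by substituting $\ell-n$ for $n$, through the common ring $\Hc^*(\mathbb{G}_{n,\ell-n},\Z)=\Hc^*(\mathbb{G}_{\ell-(\ell-n),\ell-n},\Z)$, and then pass from basic algebras to the algebras themselves via Lauda's matrix-algebra description. Your explicit check that $\deg b_\lambda=2|\lambda|$, so that the isomorphisms are degree-preserving, is a detail the paper leaves implicit but is correctly carried out.
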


Finally, we give the following corollary which can be regarded as a second version of Giambelli formula for Schubert classes (compare \cite[(3.7)]{Hi}).
It seems to be new as we did not find it anywhere in the references.

\begin{cor} \label{lastcor} For each $(a_1,\cdots,a_{n})\in\Theta_{\ell,n}$, inside $\Hc^*\bigl(\mathbb{G}_{\ell-n,\ell},\Z\bigr)$ we have that $$
\hat{\tau}^{-1}\circ\tau\bigl((a_1,\cdots,a_{n})\bigr)=\det\Bigl({c}_{a_i+i-j}\Bigr)_{1\leq i,j\leq n} ,
$$
where by convention ${c}_t:=0$ if $t$ is not between $0$ and $\ell-n$.
\end{cor}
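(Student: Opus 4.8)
The plan is to derive the formula simply by transporting the classical Giambelli formula for $\mathbb{G}_{n,\ell-n}$ (Lemma~\ref{Giam}) across the natural isomorphism $\zeta$, and then reading off the left-hand side via Theorem~\ref{mainthm3}. No new computation is really needed; all the substance is in the results already established.

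First I would start from the identity $(a_1,\cdots,a_{n})=\det\bigl(\tilde{c}_{a_i+i-j}\bigr)_{1\leq i,j\leq n}$ of Lemma~\ref{Giam}, which holds inside $\Hc^*(\mathbb{G}_{n,\ell-n},\Z)$ with the convention $\tilde{c}_t=0$ unless $0\leq t\leq \ell-n$ (and $\tilde{c}_0=1$). Applying the $\Z$-algebra isomorphism $\zeta\colon \Hc^*(\mathbb{G}_{n,\ell-n},\Z)\to\Hc^*(\mathbb{G}_{\ell-n,n},\Z)$ and using that a ring homomorphism between commutative rings commutes with taking determinants, the right-hand side becomes $\det\bigl(\zeta(\tilde{c}_{a_i+i-j})\bigr)_{1\leq i,j\leq n}$.

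Next I would identify $\zeta(\tilde{c}_t)$ with $c_t$ inside $\Hc^*(\mathbb{G}_{\ell-n,n},\Z)$. Since $\bar{c}_t=(-1)^t\tilde{c}_t$ in $\Hc^*(\mathbb{G}_{n,\ell-n},\Z)$, we have $\tilde{c}_t=(-1)^t\bar{c}_t$, and from $\zeta(\bar{c}_t)=(-1)^t c_t$ we get $\zeta(\tilde{c}_t)=(-1)^t(-1)^t c_t=c_t$ for $1\leq t\leq \ell-n$; moreover $\zeta$ fixes $1=\tilde{c}_0=c_0$ and sends the vanishing classes $\tilde{c}_t$ (for $t<0$ or $t>\ell-n$) to the corresponding vanishing classes $c_t$. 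Hence $\zeta(\tilde{c}_{a_i+i-j})=c_{a_i+i-j}$ under the stated conventions, so $\zeta\bigl((a_1,\cdots,a_n)\bigr)=\det\bigl(c_{a_i+i-j}\bigr)_{1\leq i,j\leq n}$ in $\Hc^*(\mathbb{G}_{\ell-n,n},\Z)$.

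Finally, Theorem~\ref{mainthm3} tells us that $\zeta\bigl((a_1,\cdots,a_n)\bigr)$ is precisely the Schubert class basis element represented by the $(\ell-n)$-tuple $\hat{\tau}^{-1}\circ\tau(a_1,\cdots,a_n)$, which gives the asserted equality. The only point requiring care is the bookkeeping of the two layers of signs — those in the definition of $\zeta$ and those relating $\bar{c}_t$ to $\tilde{c}_t$ — but these cancel cleanly, so there is no genuine obstacle.
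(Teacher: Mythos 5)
Your proof is correct and takes essentially the same route as the paper: the paper's one-line citation of Proposition \ref{mainthprop2}, Theorem \ref{mainthm2}, Theorem \ref{mainthm3} and the Giambelli formula amounts to exactly your transport of $\det\bigl(\tilde{c}_{a_i+i-j}\bigr)$ across $\zeta=\hat{\eta}^{-1}\circ\eta$. Your explicit verification that the two layers of signs cancel to give $\zeta(\tilde{c}_t)=c_t$, including the degenerate cases $t=0$ and $t\notin[0,\ell-n]$, is the only detail the paper leaves implicit.
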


\begin{proof} This follows from Proposition \ref{mainthprop2},  Theorem \ref{mainthm2}, Theorem \ref{mainthm3} and  \cite[(3.7)]{Hi}.
\end{proof}

\bigskip

\end{document}